\documentclass[11pt]{amsart}
\usepackage[margin=1in]{geometry}

\usepackage{amssymb}
\usepackage{amsthm}
\usepackage{amsmath}
\usepackage{mathrsfs}
\usepackage{amsbsy}
\usepackage{bm}
\usepackage{hyperref}
\usepackage{tikz}
\usepackage{array}
\usepackage{enumerate}
\usepackage{bbm}
\usepackage{comment}
\usepackage{mathtools}
\usepackage{makecell} 
\usepackage{colortbl}
\usepackage{xcolor}
\usepackage{tabularray} 
\usepackage{enumitem}

\DeclareFontFamily{U}{mathx}{}
\DeclareFontShape{U}{mathx}{m}{n}{<-> mathx10}{}
\DeclareSymbolFont{mathx}{U}{mathx}{m}{n}
\DeclareMathAccent{\widecheck}{0}{mathx}{"71}

\definecolor{lavender}{rgb}{0.4,0,1}
\definecolor{MyGreen}{rgb}{0,0.75,0}

\hypersetup{colorlinks=true, citecolor=lavender, linkcolor=lavender,urlcolor=lavender}

\usepackage{hhline}
\allowdisplaybreaks
\usepackage[noadjust]{cite}

\usepackage{caption}
\usepackage[noabbrev,capitalise,nameinlink]{cleveref}
\crefname{conjecture}{Conjecture}{Conjectures}

\newtheorem{theorem}{Theorem}[section]

\newtheorem{corollary}[theorem]{Corollary}

\newtheorem{lemma}[theorem]{Lemma}

\theoremstyle{definition}

\newtheorem{example}[theorem]{Example}

\newcommand{\Ech}{\mathrm{Ech}} 
\newcommand{\Cov}{\mathrm{Cov}} 
\newcommand{\PF}{\mathrm{PF}}
\newcommand{\Tree}{\mathrm{Tree}}
\newcommand{\oc}{\mathrm{oc}}
\newcommand{\des}{\mathrm{des}}
\newcommand{\Des}{\mathrm{Des}}
\newcommand{\exced}{\mathrm{exced}}
\newcommand{\cosum}{\mathrm{cosum}}

\newcommand{\rr}{\mathrm{rook}}
\newcommand{\Alt}{\mathrm{Alt}}

\newcommand{\rw}{\mathrm{rw}}
\newcommand{\RC}{\mathrm{RC}}

\newcommand{\dfn}[1]{\textcolor{blue}{\emph{#1}}}

\title{Short Proofs in Algebraic and Enumerative Combinatorics} 

\author{Colin Defant}

\begin{document}

\begin{abstract} 
We present several short proofs that resolve open problems from the algebraic and enumerative combinatorics literature. 
\begin{itemize}
\item First, we consider the echelonmotion operator on modular lattices. We resolve a conjecture of Defant, Jiang, Marczinzik, Segovia, Speyer, Thomas, and Williams and, consequently, obtain a new algebraic bijective proof of a classical result of Dilworth. 
\item Second, we consider statistics on parking functions studied by Stanley and Yin and by Hopkins. We prove some conjectures of Hopkins. 
\item Third, we consider centralizers in the plactic monoid. We settle two conjectures of Sagan and Wilson. 
\end{itemize} 
All of these proofs were obtained autonomously by ChatGPT~5.4 Pro. 
\end{abstract} 

\maketitle

\section{Introduction} 
The purpose of this article is to present several short proofs that settle open problems in algebraic and enumerative combinatorics. The inspiration to collect these proofs into a single manuscript came from recent articles by Alexeev, Putterman, Sawhney, Sellke, and Valiant at OpenAI \cite{APSSV1,APSSV2}. The format of those articles was, in turn, inspired by papers by Alon \cite{Alon1,Alon2,Alon3} and by Conlon, Fox, and Sudakov \cite{CFS1,CFS2}. Each section of this manuscript is devoted to a single topic and can be read independently of the other sections. 

All of the proofs in this article were discovered autonomously by ChatGPT~5.4 Pro.\footnote{The author was especially impressed by the clever constructions ChatGPT found to prove \cref{thm:echelonmotion,thm:fixed-content}.} The main contributions of the author were to find the problems, digest and verify the proofs, polish the writing, and provide exposition about each of the topics. The relevant ChatGPT conversations are included in an ancillary file \href{https://arxiv.org/abs/2605.19979}{here}.

\subsection{Notation and Terminology}
Let us briefly collect some notation and terminology that we will use throughout the manuscript. 

Given a proposition $\mathrm{P}$, let 
\[\mathbf{1}_{\mathrm{P}}=\begin{cases}1 & \text{if } \mathrm{P}\text{ is true}\\0 & \text{if }\mathrm{P}\text{ is false}.\end{cases}\]

Let $\mathbb N$ denote the set of positive integers. For $n\in\mathbb N$, let $[n]=\{1,\ldots,n\}$. Throughout this article, a \dfn{word} is a finite string of positive integers. For $X\subseteq\mathbb N$, we write $X^*$ for the set of words whose letters are in $X$.   

Let $\mathfrak S_n$ denote the symmetric group of permutations of $[n]$. A permutation $w\in\mathfrak S_n$ can be seen as a bijection $w\colon [n] \to [n]$ or as a permutation matrix whose $(i,j)$-entry is $\mathbf{1}_{w(i)=j}$. We also represent $w$ via its one-line notation $w(1)\cdots w(n)$, which is a word in $[n]^*$ in which each letter is used exactly once. 

We use English conventions when discussing arrays of boxes, numbering rows from top to bottom and numbering columns from left to right. 

\subsection{Outline}
\begin{itemize}[leftmargin=*]
\item \cref{sec:Echelonmotion} regards a bijection from a poset to itself known as \emph{echelonmotion}. This map depends on a choice of a linear extension of the poset. It is defined using the Bruhat decomposition of the general linear group, and it generalizes the classical rowmotion map. We prove a conjecture of Defant, Jiang, Marczinzik, Segovia, Speyer, Thomas, and Williams \cite{DefantEchelon} concerning echelonmotion on modular lattices (\cref{thm:echelonmotion}), which in turn yields bijective proofs of a classical result of Dilworth \cite{Dilworth}. 
\item \cref{sec:parking} regards certain statistics on parking functions. Some of the relevant generating functions are specializations of generating functions studied by Stanley and Yin \cite{StanleyYin}. We first prove a refined version of a conjecture of Hopkins about the equidistribution of some of these statistics (\cref{thm:fixed-content}). We then prove two additional enumerative conjectures of Hopkins that relate to simsun permutations and alternating permutations (\cref{thm:simsun,thm:alternating}).   
\item \cref{sec:plactic} regards the plactic monoid. Suppose $u$ is a word in $[m]^*$ whose RSK insertion tableau has $\ell$ rows. We prove that if $w$ is in the plactic centralizer of $u$, then all entries in the first $\ell$ rows of the insertion tableau of $w$ are at most $m$ (\cref{thm:plactic1}); this settles a conjecture of Sagan and Wilson \cite{SW25}. We then prove a different conjecture of Sagan and Wilson that relates the plactic centralizer of the $m$-reverse complement of $u$ to the $m$-evacuation of the plactic centralizer of $u$ (\cref{thm:6.5}).  
\end{itemize}  

\section{Echelonmotion on Modular Lattices}\label{sec:Echelonmotion}  
\dfn{Rowmotion} is a well-studied bijective operator on the set of order ideals of a finite poset. It appears in numerous settings, including dynamical algebraic combinatorics \cite{Pan09,SW12,AST13,Rob16,H24}, matroid theory \cite{DF90}, lattice theory \cite{Bar19,TW19,DW}, and representation theory \cite{IM22,MTY}. Recently, Kl\'asz, Marczinzik, and Thomas discovered a way to obtain rowmotion via the Bruhat decomposition of the general linear group \cite{KMT}. This provides a natural extension of the definition of rowmotion to an arbitrary finite poset, provided one fixes a linear extension of the poset in advance. In a subsequent article, Defant, Jiang, Marczinzik, Segovia, Speyer, Thomas, and Williams named this extension \emph{echelonmotion} \cite{DefantEchelon}, and they proved that special instances of echelonmotion recover other notable lattice-theoretic extensions of rowmotion.  

Let $R$ be a poset with $n$ elements. For $x,y\in R$, we say $y$ \dfn{covers} $x$, denoted $x\lessdot y$, if $x<y$ and there does not exist $z\in R$ such that $x<z<y$. A \dfn{linear extension} of $R$ is a bijection $\sigma\colon R\to[n]$ such that $\sigma(x)\leq\sigma(y)$ for all $x,y\in R$ with $x\leq y$. Define the \dfn{Cartan matrix} of $R$ with respect to a linear extension $\sigma$ to be the $n\times n$ matrix $W^{R,\sigma}$ whose $(i,j)$-entry is 
\[W_{i,j}^{R,\sigma}=\mathbf{1}_{\sigma^{-1}(i)\geq\sigma^{-1}(j)}.\] This matrix is lower-triangular with $1$'s on the diagonal, so it is invertible. The classical \dfn{Bruhat decomposition} of the general linear group $\mathrm{GL}_n(\mathbb C)$ states that 
\[\mathrm{GL}_n(\mathbb C)=\bigsqcup_{P\in\mathfrak S_n}BPB,\] where $B$ is the Borel subgroup of invertible upper-triangular complex $n\times n$ matrices and $\mathfrak S_n$ is the set of $n\times n$ permutation matrices. Because $W^{R,\sigma}$ is invertible, there is a unique permutation matrix $P^{R,\sigma}$ such that $W^{R,\sigma}\in BP^{R,\sigma}B$. This yields a bijection $\Ech_\sigma\colon R\to R$ defined so that $\Ech_\sigma(x)=y$ if and only if $P^{R,\sigma}_{\sigma(y),\sigma(x)}=1$. 

\begin{figure}[ht]
  \begin{center}
  \includegraphics[height=5.8cm]{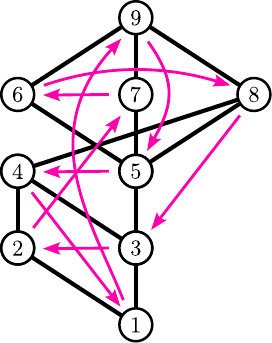}\qquad\qquad\qquad\includegraphics[height=5.8cm]{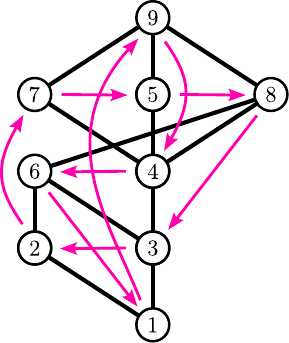}
  \end{center}
\caption{A modular lattice labeled by two different linear extensions. The pink arrows represent echelonmotion with respect to the given linear extension. }\label{fig:Echelon} 
\end{figure}

When $R$ is a distributive lattice, Kl\'asz, Marczinzik, and Thomas proved that $\Ech_\sigma$ coincides with the classical rowmotion operator on $R$ \cite{KMT}. In particular, $\Ech_\sigma$ is independent of $\sigma$ when $R$ is distributive. This motivated Defant, Jiang, Marczinzik, Segovia, Speyer, Thomas, and Williams to define a finite poset $R$ to be \dfn{echelon-independent} if $\Ech_\sigma=\Ech_{\sigma'}$ for all linear extensions $\sigma,\sigma'$ of $R$. Among other results, they proved that a finite lattice is echelon-independent if and only if it is semidistributive \cite{DefantEchelon}. Kl\'asz, Kleinau, and Marczinzik subsequently proved that a poset is echelon-independent if its incidence algebra is Auslander regular \cite{KKM}.   

Now assume $R$ is a finite lattice with meet operation $\wedge$ and join operation $\vee$. We say $R$ is \dfn{modular} if for all $a,b,x\in R$ with $a\leq b$, we have $a\vee (x\wedge b)=(a\vee x)\wedge b$. It is well known that $R$ is modular if and only if for all $a,b\in R$, we have \[a\wedge b\lessdot a\quad \Longleftrightarrow\quad b\lessdot a\vee b.\] Modular lattices arise frequently in abstract algebra. For example, the lattice of normal subgroups of a finite group is modular. 

For $x\in R$, let \[\Cov_R^\downarrow(x)=\{y\in R:y\lessdot x\}\quad\text{and}\quad\Cov_R^\uparrow(x)=\{y\in R:x\lessdot y\}\] be the set of elements covered by $x$ and the set of elements covering $x$, respectively. A foundational result due to Dilworth \cite{Dilworth} states that if $R$ is a finite modular lattice, then \[\left|\{x\in R:|\Cov_R^\downarrow(x)|=k\}\right|=\left|\{x\in R:|\Cov_R^\uparrow(x)|=k\}\right|\] for every integer $k$. 

The following theorem, which was conjectured by Defant, Jiang, Marczinzik, Segovia, Speyer, Thomas, and Williams \cite{DefantEchelon}, yields a new linear-algebraic proof of Dilworth's theorem. In fact, it shows that for each linear extension $\sigma$ of a finite modular lattice $R$, echelonmotion with respect to $\sigma$ provides a bijective proof of Dilworth's theorem for $R$.   

\begin{theorem}\label{thm:echelonmotion} 
Let $\sigma$ be a linear extension of a finite modular lattice $R$. For every $x\in R$, we have \[|\Cov_R^\uparrow(\Ech_\sigma(x))|=|\Cov_R^\downarrow(x)|.\]  
\end{theorem}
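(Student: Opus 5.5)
We would begin by turning the Bruhat condition into an explicit column-reduction rule. Left multiplication by $B$ adds multiples of lower rows to higher rows, and right multiplication by $B$ adds multiples of earlier columns to later columns. Hence the rank of every south-west submatrix of $W^{R,\sigma}$ (rows $\ge i$, columns $\le j$) is a $B\times B$ invariant, and $P^{R,\sigma}$ is the unique permutation matrix with the same south-west ranks. Column $\sigma(x)$ of $W^{R,\sigma}$ is the indicator vector $\mathbf 1_{\uparrow x}$ of the principal filter of $x$. From this we would derive the following description. Let $V_x=\mathrm{span}\{\mathbf 1_{\uparrow z}:\sigma(z)<\sigma(x)\}$. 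Then $\Ech_\sigma(x)$ is the element $y$ with $\sigma(y)$ minimal such that $\mathbf 1_{\uparrow x}\in V_x+\mathrm{span}\{e_w:\sigma(w)\ge\sigma(y)\}$. Equivalently, $y$ is the $\sigma$-largest element in the support of the vector obtained by fully reducing $\mathbf 1_{\uparrow x}$ against the earlier columns (lowest pivot). This step is routine.

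Next we would use the lattice structure to produce a good representative of $\mathbf 1_{\uparrow x}$ modulo $V_x$. Every $z<x$ satisfies $\sigma(z)<\sigma(x)$, so any combination $\mathbf 1_{\uparrow x}-\sum_{z<x}a_z\mathbf 1_{\uparrow z}$ is congruent to $\mathbf 1_{\uparrow x}$. We would take the inclusion–exclusion vector over the lower covers $c_1,\dots,c_k$ of $x$. In a modular lattice the meets of subsets of $\Cov_R^\downarrow(x)$ form a Boolean-like configuration below $x$: the interval $[c_1\wedge\cdots\wedge c_k,x]$ is complemented modular and generated by these meets. We would show that this vector is supported on elements $w$ with a prescribed relation to $x$, roughly those $w$ for which $w\wedge x$ is one of these meets.

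The heart of the argument, and the step we expect to be hardest, is the following. The reduction is relative to \emph{all} earlier columns in the order $\sigma$, not only those below $x$, so a fixed combinatorial vector does not directly give the pivot. We must understand which $y$ actually survives as the $\sigma$-last element of the support after further reduction. We would first try to prove two facts by induction on $\sigma(x)$. First, the pivot $y=\Ech_\sigma(x)$ satisfies $y\wedge x=c_1\wedge\cdots\wedge c_k$ together with a maximality property: $y$ is maximal among elements of $R$ with this meet that are not already pivots of earlier columns. Second, by the modular isomorphism $[x\wedge y,x]\cong[y,x\vee y]$, the upper covers of $y$ match the lower covers of $x$, via the maps $c\mapsto c\vee y$ and $d\mapsto d\wedge x$. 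Maximality would be used to show there are no further upper covers of $y$ outside $[y,x\vee y]$: any such cover $d$ would yield a vector witnessing an earlier pivot position, contradicting the rank count.

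If this direct pivot analysis proves too unwieldy, the fallback is a purely rank-theoretic argument. We would compute the ranks of the south-west submatrices as dimensions of spans of filter indicators, $\dim\mathrm{span}\{\mathbf 1_{\uparrow z}|_{\{w:\sigma(w)\ge i\}}:\sigma(z)\le j\}$. These would then be expressed through the Möbius function of $R$, and the two quantities $|\Cov^\uparrow_R(\Ech_\sigma(x))|$ and $|\Cov_R^\downarrow(x)|$ compared column by column. Here we would use that in a modular lattice $\mu(u,v)\neq 0$ exactly when $[u,v]$ is complemented, with the sign determined by the rank of $[u,v]$.
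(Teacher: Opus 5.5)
There is a genuine gap. Your plan rests on the claim that $y=\Ech_\sigma(x)$ satisfies $x\wedge y=c_1\wedge\cdots\wedge c_k$, together with a maximality property. This holds for distributive lattices, where $\Ech_\sigma$ is rowmotion, but it fails for modular lattices in general.

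Take the diamond lattice with bottom $\hat0$, atoms $p,q,r$, top $\hat1$, and set $\sigma(\hat0)=1$, $\sigma(p)=2$, $\sigma(q)=3$, $\sigma(r)=4$, $\sigma(\hat1)=5$. The south-west corner minors of $W=W^{R,\sigma}$ (rows $6-k,\ldots,5$, columns $1,\ldots,k$) are $1,1,1,2,1$. So $W$ lies in the big Bruhat cell, $P^{R,\sigma}$ is the anti-diagonal permutation matrix, and $\Ech_\sigma(\sigma^{-1}(j))=\sigma^{-1}(6-j)$.

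In particular $\Ech_\sigma(q)=q$. In your language, the optimal reduction of $\mathbf 1_{\uparrow q}$ uses the \emph{incomparable} earlier column: $\mathbf 1_{\uparrow q}-\mathbf 1_{\uparrow p}=(0,-1,1,0,0)^T$, whose lowest nonzero entry is in row $3$. But $q\wedge\Ech_\sigma(q)=q\neq\hat0$, the unique lower cover of $q$. Your maximality rule would instead predict $p$: the elements meeting $q$ in $\hat0$ are $\hat0,p,r$, and $r=\Ech_\sigma(p)$ is already taken. So no induction can establish that description.

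Separately, the matching step is mis-stated even when $x\wedge y=\bigwedge\Cov_R^\downarrow(x)$ does hold. The isomorphism $t\mapsto t\vee y$ sends the lower covers of $x$, which are \emph{coatoms} of $[x\wedge y,x]$, to coatoms of $[y,x\vee y]$. These are in general not upper covers of $y$. To repair this you would also need that a complemented modular interval has equally many atoms and coatoms.

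Since $\Ech_\sigma$ genuinely depends on $\sigma$ when $R$ is not semidistributive, there is little hope for a clean lattice-theoretic description of the pivots. Your M\"obius-function fallback gives no mechanism for reading off the pivot of a column, so it is not yet an argument. (Also, in your first description of the pivot the condition should be $\sigma(w)\le\sigma(y)$; as written, $\sigma(y)=1$ always works. Your lowest-pivot reformulation is correct.)

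The missing idea is to avoid locating $\Ech_\sigma(x)$ at all. The paper defines upper-triangular matrices $A$ and $B$ with diagonal entries $A_{\sigma(y),\sigma(y)}=|\Cov_R^\uparrow(y)|$ and $B_{\sigma(x),\sigma(x)}=|\Cov_R^\downarrow(x)|$, and entries $-1$ in the positions of cover relations. It then checks $AW=WB$ entrywise. For $x\not\leq y$, the only possible $z$ with $y\lessdot z$ and $x\le z$ is $x\vee y$, and the only possible $w$ with $w\lessdot x$ and $w\le y$ is $x\wedge y$. Modularity gives $x\wedge y\lessdot x\iff y\lessdot x\vee y$, which makes the two entries agree.

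Writing $W=UPV$ with $U,V$ invertible upper-triangular gives $P^{-1}(U^{-1}AU)P=VBV^{-1}$. Conjugation by invertible upper-triangular matrices preserves diagonal entries. Comparing $(\sigma(x),\sigma(x))$-entries therefore yields $|\Cov_R^\downarrow(x)|=|\Cov_R^\uparrow(\Ech_\sigma(x))|$. Modularity enters only through that local cover equivalence, never through a description of the pivots.
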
 
\begin{proof}
Let $n=|R|$, and let $W=W^{R,\sigma}$. Define two $n\times n$ matrices $A$ and $B$ by letting \[A_{\sigma(y),\sigma(z)}=\begin{cases}|\Cov_R^\uparrow(y)| & \text{if } z=y\\-1 & \text{if } y\lessdot z\\0 & \text{otherwise}\end{cases}\qquad\text{and}\qquad B_{\sigma(w),\sigma(x)}=\begin{cases}|\Cov_R^\downarrow(x)| & \text{if } w=x\\-1 & \text{if } w\lessdot x\\0 & \text{otherwise}\end{cases}\] for all $w,x,y,z\in R$. Since $\sigma$ is a linear extension, both $A$ and $B$ are upper-triangular. We claim that $AW=WB$. Indeed, for $x,y\in R$, the $(\sigma(y),\sigma(x))$-entry of $AW$ is 
\begin{equation}\label{eq:1.1}(AW)_{\sigma(y),\sigma(x)}=|\Cov_R^\uparrow(y)|\cdot\mathbf{1}_{x\le y}-|\{z\in R: y\lessdot z,\ x\le z\}|,
\end{equation} whereas the $(\sigma(y),\sigma(x))$-entry of $WB$ is 
\begin{equation}\label{eq:1.2} (WB)_{\sigma(y),\sigma(x)}=|\Cov_R^\downarrow(x)|\cdot\mathbf{1}_{x\le y}-|\{w\in R: w\lessdot x,\ w\le y\}|.
\end{equation} If $x\le y$, then every element covering $y$ lies above $x$, and every element covered by $x$ lies below $y$. Hence, the expressions in \eqref{eq:1.1} and \eqref{eq:1.2} are both equal to $0$ when $x\leq y$. 

Now suppose $x\nleq y$ so that $\mathbf{1}_{x\leq y}=0$. If $z$ covers $y$ and $x\le z$, then \[y < x\vee y \le z,\] so $z=x\vee y$. Therefore, \[|\{z\in R: y\lessdot z,\ x\le z\}|=\mathbf{1}_{y\lessdot x\vee y}.\] Similarly, if $w\lessdot x$ and $w\le y$, then \[w\le x\wedge y < x,\] so $w=x\wedge y$. Therefore, \[|\{w\in R: w\lessdot x,\ w\le y\}|=\mathbf{1}_{x\wedge y\lessdot x}.\] Because $R$ is modular, we have $x\wedge y\lessdot x$ if and only if $y\lessdot x\vee y$. It follows that the expressions in \eqref{eq:1.1} and \eqref{eq:1.2} are equal when $x\not\leq y$. 

We have proven the claim that $AW=WB$. Now choose invertible upper-triangular $n\times n$ matrices $U$ and $V$ such that $W=UPV$, where $P=P^{R,\sigma}$. Using the equation $AW=WB$, we obtain that $AUPV=UPVB$. Thus, $(U^{-1}AU)P=P(VBV^{-1})$. Set \[\widetilde A=U^{-1}AU\quad\text{and}\quad\widetilde B=VBV^{-1}.\] The matrices $\widetilde A$ and $\widetilde B$ are upper-triangular. Moreover, conjugating an upper-triangular matrix by an invertible upper-triangular matrix preserves diagonal entries, so \[\widetilde A_{\sigma(u),\sigma(u)}=|\Cov_R^\uparrow(u)|\quad\text{and}\quad\widetilde B_{\sigma(u),\sigma(u)}=|\Cov_R^\downarrow(u)|\] for all $u\in R$. The equation $\widetilde A P=P\widetilde B$ gives \[P^{-1}\widetilde A P=\widetilde B.\] Let $x\in R$, and set $y=\Ech_{\sigma}(x)$. By the definition of echelonmotion, $P_{\sigma(y),\sigma(x)}=1$. Taking the $(\sigma(x),\sigma(x))$-entry of $P^{-1}\widetilde A P=\widetilde B$, we obtain \[|\Cov_R^\downarrow(x)|=\widetilde B_{\sigma(x),\sigma(x)}=\widetilde A_{\sigma(y),\sigma(y)}=|\Cov_R^\uparrow(y)|=|{\Cov}^{\uparrow}_{R}({\Ech}_{\sigma}(x))|,\] as desired. 
\end{proof}

\section{Parking Function Statistics}\label{sec:parking} 
\subsection{Generating Functions} 
Write $\Tree(n+1)$ for the set of trees with vertex set $\{0,1,\ldots,n\}$, rooted at $0$.  An \dfn{inversion} of a tree $T\in \Tree(n+1)$ is a pair $(i,j)$ with $1\leq i<j\leq n$ such that $j$ lies on the path from $i$ to the root.  Let $\mathrm{inv}(T)$ denote the number of inversions of $T$. The \dfn{tree inversion enumerator} is
\[
I_n(q)=\sum_{T\in \Tree(n+1)}q^{\mathrm{inv}(T)}.
\]

A \dfn{parking function} of length $n$ is a sequence $\pi=(\pi_1,\ldots,\pi_n)$ of positive integers such that if $b_1\leq \cdots \leq b_n$ is the weakly increasing rearrangement of the entries of $\pi$, then $b_j\leq j$ for all $j$.  Let $\PF(n)$ be the set of parking functions of length $n$.  For $\pi\in\PF(n)$, define
\[
\cosum(\pi)=\binom{n+1}{2}-\sum_{i=1}^n \pi_i.
\]
Kreweras \cite{Kreweras} proved that 
\[I_n(q)=\sum_{\pi\in \PF(n)}q^{\cosum(\pi)}.
\]

Hopkins introduced two $t$-analogues of $I_n(q)$ \cite{Hopkins}.  The analogue on the tree side is
\[
I_n(q,t)=\sum_{T\in \Tree(n+1)}q^{\mathrm{inv}(T)}t^{\mathrm{lev}(T)-1},
\]
where $\mathrm{lev}(T)$ is the number of leaves of $T$.  The analogue on the parking function side is
\[\widetilde I_n(q,t)=\sum_{\pi\in \PF(n)}q^{\cosum(\pi)}t^{\exced(\pi)},
\]
where
\[
\exced(\pi)=|\{i\in [n]:\pi_i>i\}|
\]
is the number of \dfn{strict excedances} of $\pi$.

To compare these two analogues, one uses the ordinary parking procedure.  A line of $n$ cars arrives at a parking lot with $n$ parking spots numbered $1,\ldots,n$ from left to right.  The $i$-th car first tries to park in its preferred parking spot, which is spot $\pi_i$; if it is occupied, the car moves forward to the next available spot.  The sequence $\pi$ is a parking function precisely when all cars park successfully.  Define the \dfn{parking outcome}
\[
\oc(\pi)=\sigma(1)\cdots \sigma(n)\in \mathfrak S_n
\]
by declaring that the $i$-th car parks in spot $\sigma(i)$.  Thus, $\oc(\pi)^{-1}$ lists, from left to right along the street, which car occupies each spot.

A \dfn{descent} of a permutation $w\in\mathfrak S_n$ is an index $i\in[n-1]$ such that ${w(i)>w(i+1)}$. Let $\Des(w)=\{i\in[n-1]:w(i)>w(i+1)\}$ be the set of descents of $w$, and let $\des(w)=|\Des(w)|$. 
Stanley and Yin \cite{StanleyYin} studied a four-variable parking-function/tree generating function, and one of its two-variable specializations yields the identity
\[
I_n(q,t)=\sum_{\pi\in \PF(n)}q^{\cosum(\pi)}t^{\des(\oc(\pi)^{-1})}.
\]
This motivated Hopkins \cite{Hopkins} to conjecture the identity \[
\widetilde I_n(q,t)=\sum_{\pi\in \PF(n)}q^{\cosum(\pi)}t^{\des(\oc(\pi))}.
\]  

A \dfn{parking content} is a sequence $b=(b_1,\ldots, b_n)$ of positive integers such that $b_1\leq\cdots\leq b_n$ and such that $b_j\leq j$ for all $j$.  Let $\PF_{\leq}(n)$ be the set of parking contents of length $n$. Given a parking content $b\in\PF_{\leq}(n)$ and a permutation $w=w_1\cdots w_n\in \mathfrak S_n$, we define \[\pi^{b,w}=(\pi_1^{b,w},\ldots,\pi_n^{b,w})\in\PF(n),\] where $\pi^{b,w}_i=b_{w(i)}$.  We also define $\sigma^{b,w}=\oc(\pi^{b,w})\in\mathfrak S_n$. 

Our main result in this section is the following theorem. 

\begin{theorem}\label{thm:fixed-content}
For every fixed parking content $b\in\PF_{\leq}(n)$, we have 
\[
\sum_{w\in \mathfrak S_n}t^{\exced(\pi^{{b,w}})}=\sum_{w\in \mathfrak S_n}t^{\mathrm{des}(\sigma^{b,w})}.
\]
\end{theorem}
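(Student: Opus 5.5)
We will reduce both sides to a statement about rook placements on a single Ferrers board and then match them. The first step is a reduction. Let $m_k$ be the multiplicity of $k$ in $b$. The map $w\mapsto\pi^{b,w}$ is $\prod_k m_k!$-to-one onto the set of rearrangements of $b$. Both $\exced(\pi^{b,w})$ and $\sigma^{b,w}=\oc(\pi^{b,w})$ depend only on $\pi^{b,w}$. So it is harmless to keep summing over all of $\mathfrak S_n$, and this labelled form is convenient.

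\emph{Excedance side.} We have $\exced(\pi^{b,w})=|\{i: b_{w(i)}>i\}|$, which is the number of cells of the permutation matrix of $w$ lying in the board
\[\mathcal B_b=\{(i,j)\in[n]^2: b_j>i\}.\]
Since $b$ is weakly increasing, column $j$ of $\mathcal B_b$ consists of rows $1,\ldots,b_j-1$, so $\mathcal B_b$ is a Ferrers board with nondecreasing column heights $h_j=b_j-1$. The classical hit-number identity then gives
\[\sum_{w\in\mathfrak S_n}t^{\exced(\pi^{b,w})}=\sum_{k\ge0}r_k(\mathcal B_b)\,(n-k)!\,(t-1)^k,\]
where $r_k$ counts nonattacking rook placements. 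By Goldman--Joichi--White, $r_k(\mathcal B_b)$ is determined by the factorization $\prod_j(x+h_j-j+1)$. Because $h_j\le j-1$ (the parking condition $b_j\le j$), these factors are well behaved. An alternative route is to put a sorted order on the board and compute $r_k$ directly.

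\emph{Descent side.} It suffices to prove
\[\#\{(w,D): w\in\mathfrak S_n,\ D\subseteq\Des(\sigma^{b,w}),\ |D|=k\}=r_k(\mathcal B_b)\,(n-k)!\]
for every $k$, since $t^{\des}=\sum_D (t-1)^{|D|}$. The natural plan is a bijection sending a pair $(w,D)$ to a pair consisting of a $k$-rook placement on $\mathcal B_b$ and a permutation of the remaining $n-k$ items. The guiding observation is that $i\in\Des(\sigma^{b,w})$ means car $i$ parks to the right of car $i+1$. Since car $i+1$ arrives later, this forces car $i$ to have been pushed past, or to prefer a spot beyond, spot $\sigma(i+1)$. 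So a marked descent at position $i$ should correspond to a rook in row $i$, recording the index $j=w(i)$ with $b_j>i$, or a suitably shifted version of this.

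If this direct bijection is hard to realize, we will instead argue by induction on $n$. Deleting the last car (car $n$) changes $\sigma$ only by removing the value $\sigma(n)$ and standardizing. It changes $\Des$ only possibly at position $n-1$, whether $\sigma(n-1)>\sigma(n)$, and it removes one entry $b_{w(n)}$ from the content. On the excedance side, removing row $n$, together with the column $w(n)$, gives the matching recursion for $\mathcal B_b$. We would then show that both sides satisfy the same recursion in $(b,t)$, summing over which entry $b_j$ is assigned to the last car.

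The main obstacle is this descent-side computation. We must show that, for fixed content, the number of ways the last car lands to the left of the previous car depends on $b$ exactly as the number of cells available in the corresponding row of the Ferrers board. We expect to handle it by conditioning on the set of spots left empty before the last two cars arrive, and by using the symmetry that swapping the preferences of cars $n-1$ and $n$ preserves the multiset content.
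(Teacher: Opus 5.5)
Your excedance side and your reduction of the descent side to counting pairs $(w,D)$ with $D\subseteq\Des(\sigma^{b,w})$ and $|D|=k$ agree with the paper. However, the descent side is the whole difficulty, and it is not actually proved.

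The map you propose is wrong as stated: a marked descent at $i$ does not give a rook in row $i$. Take $b=(1,2,2)$ and $w=231$. Then $\pi^{b,w}=(2,2,1)$ and $\sigma^{b,w}=231$, so $2\in\Des(\sigma^{b,w})$. But $b_{w(2)}=b_3=2\leq 2$, so $(2,w(2))\notin\mathcal B_b$. The correct row is the spot $\sigma^{b,w}(i+1)$. That spot is still empty when car $i$ arrives, so car $i$ cannot have been ``pushed past'' it. Hence $b_{w(i)}>\sigma^{b,w}(i+1)$, and the right map is $(w,D)\mapsto\{(\sigma^{b,w}(i+1),w(i)):i\in D\}$. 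It is nonattacking because $\sigma^{b,w}$ and $w$ are permutations.

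The more serious omission is the fiber count. Even with this map, you must show that every $k$-rook placement $\{(r_1,c_1),\dots,(r_k,c_k)\}$ on $\mathcal B_b$, with $r_1<\cdots<r_k$, has exactly $(n-k)!$ preimages. Your proposal contains no mechanism for this. The paper supplies one by an insertion procedure:
\begin{itemize}
\item Start from any ordering of $[n]\setminus\{c_1,\dots,c_k\}$.
\item For $j=1,\dots,k$, park the current word (car $d$ preferring $b_d$) and insert $c_j$ immediately before the car occupying spot $r_j$.
\end{itemize}
Three things must then be verified:
\begin{itemize}
\item Spot $r_j$ is occupied. This is where $b_m\leq m$ enters, combined with $b_c>r_j$ for the columns not yet inserted.
\item Inserting $c_j$ leaves that car in spot $r_j$ and creates the required descent. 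This holds because $b_{c_j}>r_j$, so $c_j$ never touches spots $\leq r_j$.
\item Deleting $c_k,\dots,c_1$ inverts the process.
\end{itemize}

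Your inductive fallback does not replace this argument. After deleting the last car, the remaining $n-1$ cars occupy $[n]\setminus\{\sigma^{b,w}(n)\}$ rather than $[n-1]$. The remaining content also need not be a parking content: for $b=(1,2)$ and $\pi^{b,w}=(2,1)$ it is $(2)$. So the recursion leaves the class the theorem is about, and the claimed matching recursion for descents is never formulated, let alone proved.
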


\cref{thm:fixed-content} implies Hopkins's conjecture. 

\begin{corollary}\label{cor:Hopkins}
For each $n\geq 1$, we have \[
\widetilde I_n(q,t)=\sum_{\pi\in \PF(n)}q^{\cosum(\pi)}t^{\des(\oc(\pi))}.
\]
\end{corollary}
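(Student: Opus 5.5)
The plan is to compare both sides against a single rook-theoretic expression. I would first rewrite the left-hand side as a hit polynomial, then show that the right-hand side expands to the same rook numbers.

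\textbf{Left side.} Since $\pi^{b,w}_i=b_{w(i)}$, we have
\[
\exced(\pi^{b,w})=|\{i\in[n]: b_{w(i)}>i\}|.
\]
This is the number of cells of the permutation matrix of $w$ lying in the board
\[
F_b=\{(i,j)\in[n]^2 : i<b_j\}.
\]
Because $b$ is weakly increasing, the column heights $b_j-1$ are weakly increasing in $j$. Hence $F_b$ is a Ferrers board, and the parking condition $b_j\le j$ says it fits strictly below the diagonal. The standard hit-number expansion then gives
\[
\sum_{w\in\mathfrak S_n}t^{\exced(\pi^{b,w})}=\sum_{k\ge 0} r_k(F_b)\,(n-k)!\,(t-1)^k .
\]
Here $r_k(F_b)$ is the number of placements of $k$ non-attacking rooks on $F_b$, and it has the Goldman--Joichi--White product form in terms of the heights $b_j-1$.

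\textbf{Right side.} I would use the descent analogue of the same expansion:
\[
\sum_{w}t^{\des(\sigma^{b,w})}=\sum_{S\subseteq[n-1]}(t-1)^{|S|}\,\bigl|\{w\in\mathfrak S_n: S\subseteq \Des(\sigma^{b,w})\}\bigr|.
\]
It therefore suffices to prove, for each $k$,
\[
\sum_{|S|=k}\bigl|\{w: S\subseteq\Des(\sigma^{b,w})\}\bigr| = r_k(F_b)\,(n-k)! .
\]
I would prove this by a weight-preserving bijection. On one side are pairs $(S,w)$ with $S\subseteq\Des(\sigma^{b,w})$ and $|S|=k$. On the other are pairs consisting of a $k$-rook placement on $F_b$ and a permutation of the $n-k$ remaining letters.

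The dynamics I would exploit are these. A descent $\sigma(i)>\sigma(i+1)$ at a marked position $i$ means car $i+1$ parks to the left of car $i$. I want to read off from this a rook in row $i$ of $F_b$, i.e.\ a preference value $b_j$ with $b_j>i$. The natural candidate is the spot, or the content index, that forces car $i$ to the right. The non-attacking condition should come from distinct marked descents using distinct preference values.

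\textbf{Fallback.} If a direct bijection resists, I would instead induct on $n$ by removing the last entry $b_n$ of the content. On the left, deleting the largest column of the Ferrers board gives a clean recurrence: the new column of height $b_n-1$ multiplies the rook polynomial in the factorial basis by the appropriate linear factor. On the right, I would track where the car with preference $b_n$ is inserted into the sequence of cars and how its outcome changes the descent count. The aim is to show that inserting this car contributes exactly $b_n-1$ insertion positions that create a new descent and $n-b_n+1$ that do not, up to a descent-preserving rearrangement. That would match the recurrence for hit polynomials of Ferrers boards.

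\textbf{Main obstacle.} The hard step is controlling how $\sigma^{b,w}$ changes. Inserting or reordering one car can push other cars to different spots, so descents elsewhere may change. I would first test the insertion on small contents such as $b=(1,1,2)$ and $(1,2,2)$ to find an insertion rule (for instance, inserting the new car at a chosen position among the cars and re-parking) for which all other relative orders of outcomes are preserved.
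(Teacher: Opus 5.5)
Your framework matches the paper's. The excedance side is exactly the rook expansion $\sum_k r_k(F_b)(n-k)!(t-1)^k$, and the descent side reduces to showing that the pairs $(w,S)$ with $S\subseteq\Des(\sigma^{b,w})$ and $|S|=k$ number $r_k(F_b)(n-k)!$.

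You pass silently from the corollary to a fixed content $b$. That step needs two facts: $\cosum$ is constant on parking functions with increasing rearrangement $b$, and each such $\pi$ equals $\pi^{b,w}$ for exactly $\prod_a|\{i:b_i=a\}|!$ permutations $w$. Together these make both sides, restricted to content $b$, the fixed-content sums times a common factor. This is in fact the paper's entire proof of the corollary.

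The real gap is that the count $r_k(F_b)(n-k)!$ carries all the difficulty, and you do not prove it. Worse, the one concrete rule you suggest, reading a rook in row $i$ off a marked descent at position $i$, cannot work. It forces the row set of the placement to be $S$, so for every $S$ one would need $|\{w:S\subseteq\Des(\sigma^{b,w})\}|$ to equal $(n-k)!$ times the number of placements with row set $S$. Take $b=(1,2,3)$, where $\sigma^{b,w}=w$. For $S=\{1\}$ there are $3$ permutations ($213,312,321$). But row $1$ of $F_b$ has two cells, giving $2\cdot 2!=4$. Only the totals over $|S|=k$ agree. Also, distinct columns come from distinct content indices $w(i)$, not distinct preference values, since $b$ may repeat values.

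The missing idea is to place the rook at $(\sigma^{b,w}(i+1),w(i))$, so its row is the \emph{spot} of the later car. This cell lies in $F_b$: spot $\sigma^{b,w}(i+1)$ was still empty when car $i$ arrived, yet car $i$ drove past it, so $b_{w(i)}>\sigma^{b,w}(i+1)$. Rows and columns are then automatically distinct.

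One then shows that each placement $\{(r_j,c_j)\}$ with $r_1<\cdots<r_k$ has exactly $(n-k)!$ preimages. Start from any ordering of the $n-k$ unused labels. For $j=1,\ldots,k$, park the current word, with label $d$ preferring $b_d$. Spot $r_j$ is occupied, because every label $d$ with $b_d\le r_j$ is present and there are at least $r_j$ of them. Insert $c_j$ immediately before the car $d_j$ in spot $r_j$. Since $b_{c_j}>r_j$, spots $1,\ldots,r_j$ are unaffected, $d_j$ stays in spot $r_j$, and $c_j$ parks to its right, creating the required descent. Later insertions have larger thresholds, and the whole process is undone by deleting $c_k,\ldots,c_1$. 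This is precisely how the ``main obstacle'' you identify gets controlled.

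Your fallback is also wrong as stated. A uniform split of $b_n-1$ descent-creating and $n-b_n+1$ neutral insertions would give $H_n(t)=(n-h+ht)H_{n-1}(t)$ with $h=b_n-1$. That already fails for $b=(1,2,3)$: it gives $(1+2t)(1+t)\neq 1+4t+t^2$. Adjoining the tallest column actually gives $H_n(t)=(n+h(t-1))H_{n-1}(t)-t(t-1)H_{n-1}'(t)$. So an object with $d$ hits needs $h-d$ increasing and $n-h+d$ neutral insertions, a split that depends on $d$ and not only on $b_n$.
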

\begin{proof}
For $b\in \operatorname{PF}_{\le}(n)$, let $\PF_b$ be the set of parking functions in $\PF(n)$ whose weakly increasing rearrangement is $b$. Also, define
\[
\mu(b)=\prod_{a\ge 1} |\{i\in[n]: b_i=a\}|!.
\]
We claim that for each $\pi\in\PF_b$, we have 
\[\mu(b)=|\{w\in\mathfrak S_n:\pi^{b,w}=\pi\}|.\]
Indeed, for each positive integer $a$, one must biject the indices $i$ with $\pi_i=a$ with the indices $j$ such
that $b_j=a$; these choices are independent for the different values of $a$. It follows that
\[
\sum_{\pi\in \operatorname{PF}_b} t^{\operatorname{exced}(\pi)}
=
\frac{1}{\mu(b)}
\sum_{w\in \mathfrak S_n} t^{\operatorname{exced}(\pi^{b,w})}
\]
and, since $\sigma^{b,w}=\operatorname{oc}(\pi^{b,w})$,
\[
\sum_{\pi\in \operatorname{PF}_b}
t^{\operatorname{des}(\operatorname{oc}(\pi))}
=
\frac{1}{\mu(b)}
\sum_{w\in \mathfrak S_n} t^{\operatorname{des}(\sigma^{b,w})}.
\]
Moreover, the statistic $\operatorname{cosum}$ is constant on
$\operatorname{PF}_b$. More explicitly, 
\[
\operatorname{cosum}(\pi)=\binom{n+1}{2}-\sum_{i=1}^n b_i
\]
for every $\pi\in\PF_b$. 

Multiplying the identity in \cref{thm:fixed-content} by
\[
\frac{q^{\binom{n+1}{2}-\sum_{i=1}^n b_i}}{\mu(b)}
\]
therefore gives
\[
\sum_{\pi\in \operatorname{PF}_b}
q^{\operatorname{cosum}(\pi)}t^{\operatorname{exced}(\pi)}
=
\sum_{\pi\in \operatorname{PF}_b}
q^{\operatorname{cosum}(\pi)}
t^{\operatorname{des}(\operatorname{oc}(\pi))}.
\]
Summing this identity over all $b\in \operatorname{PF}_{\le}(n)$ yields
\[
\widetilde I_n(q,t)
=
\sum_{\pi\in \operatorname{PF}(n)}
q^{\operatorname{cosum}(\pi)}
t^{\operatorname{exced}(\pi)}=
\sum_{\pi\in \operatorname{PF}(n)}
q^{\operatorname{cosum}(\pi)}
t^{\operatorname{des}(\operatorname{oc}(\pi))}. \qedhere 
\]
\end{proof}

The proof of \cref{thm:fixed-content} is naturally expressed using a Ferrers board.  Fix $b\in\PF_\leq(n)$, and let $B_b$ be the board with columns $1,\ldots,n$ and rows $1,\ldots,n$, where column $c$ contains the cells
\[
(1,c),(2,c),\ldots,(b_c-1,c).
\]
Equivalently, $(r,c)\in B_b$ if and only if $r<b_c$.  Let $\rr_k(B_b)$ denote the number of ways to place $k$ nonattacking rooks on $B_b$.

\begin{lemma}\label{lem:excedance-side}
For each fixed $b\in\PF_\leq(n)$, we have \[\sum_{w\in \mathfrak S_n}t^{\exced(\pi^{b,w})}=\sum_{k\geq 0}\rr_k(B_b)(n-k)!(t-1)^k.\]
\end{lemma}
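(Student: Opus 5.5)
The plan is to identify excedances of $\pi^{b,w}$ with rook placements on $B_b$ via the permutation matrix of $w$, and then apply the standard inclusion--exclusion for counting permutations by the number of hits on a board.

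First translate the statistic. For $w\in\mathfrak S_n$ we have $\pi^{b,w}_i=b_{w(i)}$. Hence $i$ is a strict excedance of $\pi^{b,w}$ if and only if $i<b_{w(i)}$, that is, if and only if the cell $(i,w(i))$ lies in $B_b$. Thus $\exced(\pi^{b,w})=|\{i:(i,w(i))\in B_b\}|$, which is the number of cells of the rook placement $\{(i,w(i))\}$ (the permutation matrix of $w$) that lie on the board. In other words, the left-hand side of the lemma is the hit polynomial $\sum_{w} t^{h(w)}$ of the board $B_b$, where $h(w)$ denotes the number of hits.

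Next apply the classical hit/rook identity: write $t^{h(w)}=\bigl(1+(t-1)\bigr)^{h(w)}=\sum_{S}(t-1)^{|S|}$, where $S$ ranges over subsets of the hit cells of $w$. Swapping the order of summation gives
\[
\sum_{w\in\mathfrak S_n}t^{h(w)}=\sum_{k\ge0}(t-1)^k\,\bigl|\{(w,S): S\subseteq \text{hits of } w,\ |S|=k\}\bigr|.
\]
Each such $S$ is a placement of $k$ nonattacking rooks on $B_b$, since the cells of $S$ lie in distinct rows and columns of the permutation matrix. Conversely, a given nonattacking placement $S$ of $k$ rooks on $B_b$ extends to a permutation $w$ containing it in exactly $(n-k)!$ ways, by completing the bijection on the remaining $n-k$ rows and columns. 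Every cell of $S$ is automatically a hit of $w$, because $S\subseteq B_b$. Hence the pairs $(w,S)$ with $|S|=k$ number $\rr_k(B_b)(n-k)!$, which gives the formula.

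There is no serious obstacle here. The only care needed is the row/column convention: rows index positions $i$, columns index values $w(i)$, and the board condition $r<b_c$ matches $i<b_{w(i)}$. With that convention the argument is routine. Note that neither the parking-function nor the Ferrers property of $b$ is needed for this lemma; those properties presumably matter only on the descent side.
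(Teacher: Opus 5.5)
Your proposal is correct and follows essentially the same argument as the paper. The paper also reads $w$ as the rook placement $\{(i,w(i))\}$, observes that $(i,w(i))\in B_b$ exactly when $i$ is a strict excedance of $\pi^{b,w}$, expands $t^m=(1+(t-1))^m$, and counts the marked rooks as $\rr_k(B_b)(n-k)!$.
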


\begin{proof}
A permutation $w\in \mathfrak S_n$ is equivalent to a full nonattacking rook placement on an $n\times n$ board with one rook in each row and each column. Indeed, we simply put the rook in column $w(i)$ in row $i$; this rook lies in $B_b$ exactly when $i<b_{w(i)}$, which is exactly when $i$ is a strict excedance of $\pi^{b,w}$. 

Expanding $t^m=(1+(t-1))^m$, we find that the coefficient of $(t-1)^k$ in $\sum_{w\in \mathfrak S_n}t^{\exced(\pi^{b,w})}$ is the number of full nonattacking  rook placements on an $n\times n$ board together with $k$ marked rooks lying in $B_b$.  There are $\rr_k(B_b)$ ways to choose the $k$ marked rooks first and then $(n-k)!$ ways to biject the remaining $n-k$ rows with the remaining $n-k$ columns.
\end{proof}

Note that the coefficient of $(t-1)^k$ in $\sum_{w\in \mathfrak S_n}t^{\mathrm{des}(\sigma^{b,w})}$ is the number of pairs $(w,A)$ with $w\in\mathfrak S_n$ such that $A\subseteq \Des(\sigma^{b,w})$ and $|A|=k$. For such a pair, define
\[\Phi(w,A)=\{(\sigma^{b,w}(i+1),w(i)):i\in A\}.
\]

\begin{lemma}\label{lem:Phi-board}
Fix $b\in\PF_\leq(n)$. Suppose $w\in \mathfrak S_n$ and $A\subseteq\Des(\sigma^{b,w})$. Then $\Phi(w,A)$ is a nonattacking $|A|$-rook placement on $B_b$. 
\end{lemma}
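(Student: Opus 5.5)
The plan is to check the two defining conditions directly: every proposed cell lies in $B_b$, and no two of the cells share a row or a column. Write $\sigma=\sigma^{b,w}$ and $\pi=\pi^{b,w}$, so that $\pi_i=b_{w(i)}$ and car $i$ parks in spot $\sigma(i)$.

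\emph{Nonattacking.} The rooks of $\Phi(w,A)$ sit in rows $\sigma(i+1)$ and columns $w(i)$ for $i\in A$. Since $\sigma$ and $w$ are permutations, the maps $i\mapsto\sigma(i+1)$ and $i\mapsto w(i)$ are injective on $A$. Hence distinct elements of $A$ give rooks in distinct rows and distinct columns. In particular $|\Phi(w,A)|=|A|$.

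\emph{Membership in $B_b$.} Fix $i\in A$. We must show $(\sigma(i+1),w(i))\in B_b$, which by the definition of the board means
\[
\sigma(i+1)<b_{w(i)}=\pi_i.
\]
This is the only substantive step, and I would prove it from the mechanics of the parking procedure.

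First, car $i$ prefers spot $\pi_i$ and drives forward to the first available spot. So $\pi_i\le\sigma(i)$, and every spot $s$ with $\pi_i\le s<\sigma(i)$ was already occupied when car $i$ arrived.

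Now suppose, for contradiction, that $\sigma(i+1)\ge\pi_i$. Because $i$ is a descent of $\sigma$, we also have $\sigma(i+1)<\sigma(i)$. So $\sigma(i+1)$ lies in the interval $[\pi_i,\sigma(i))$. By the previous paragraph, that spot was occupied by some car $j<i$ before car $i$ arrived, and hence before car $i+1$ arrived. But car $i+1$ parks in spot $\sigma(i+1)$, so that spot was free when car $i+1$ arrived. This is a contradiction.

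Therefore $\sigma(i+1)<\pi_i=b_{w(i)}$, so the cell $(\sigma(i+1),w(i))$ lies in column $w(i)$ of $B_b$. Combining this with the nonattacking property shows that $\Phi(w,A)$ is a nonattacking $|A|$-rook placement on $B_b$.

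I expect no real obstacle. The one place needing care is the observation that the spots car $i$ skips over are exactly the interval $[\pi_i,\sigma(i))$ and are all filled by earlier cars; this is immediate from the definition of the parking procedure.
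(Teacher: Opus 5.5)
Your proof is correct and follows essentially the paper's argument. Injectivity of $i\mapsto\sigma(i+1)$ and $i\mapsto w(i)$ gives the nonattacking property, and membership in $B_b$ comes from the same contradiction: spot $\sigma(i+1)$ is still empty when car $i$ arrives, so car $i$ could not have passed over it if $b_{w(i)}\le\sigma(i+1)$. Your phrasing via the interval $[\pi_i,\sigma(i))$ of skipped spots is just a restatement of that step.
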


\begin{proof}
Because $\sigma^{b,w}$ and $w$ are permutations, $\Phi(w,A)$ is certainly a $|A|$-rook placement on an $n\times n$ board. It remains to show that for each $i\in A$, the cell $(\sigma^{b,w}(i+1),w(i))$ is in $B_b$.

Fix $i\in A$.  Note that the $i$-th car parked in spot $\sigma^{b,w}(i)$, the $(i+1)$-th car parked in spot $\sigma^{b,w}(i+1)$, and
$\sigma^{b,w}(i)>\sigma^{b,w}(i+1)$.
When the $i$-th car arrived, the later spot $\sigma^{b,w}(i+1)$ was still empty.  If $b_{w(i)}\leq \sigma^{b,w}(i+1)$, then the $i$-th car, starting at spot $b_{w(i)}$ and moving forward, would have parked at some empty spot at or before $\sigma^{b,w}(i+1)$, contradicting the fact that $\sigma^{b,w}(i)>\sigma^{b,w}(i+1)$.  This shows that
$\sigma^{b,w}(i+1)<b_{w(i)}$, so $(\sigma^{b,w}(i+1),w(i))\in B_b$.
\end{proof}

\begin{proof}[Proof of \cref{thm:fixed-content}]
We claim that every nonattacking $k$-rook placement on $B_b$ has exactly ${(n-k)!}$ preimages under~$\Phi$. If we can prove this claim, then the desired identity will follow immediately from \cref{lem:excedance-side,lem:Phi-board}. 

Let $R=\{(r_1,c_1),\ldots,(r_k,c_k)\}$ be a nonattacking $k$-rook placement on $B_b$, where ${r_1<\cdots<r_k}$. We describe a bijection between $\Phi^{-1}(R)$ and the set of words obtained by ordering $[n]\setminus\{c_1,\ldots,c_k\}$. 

Start with a word $u^{(0)}$ obtained by ordering $[n]\setminus\{c_1,\ldots,c_k\}$.  We will generate a sequence $u^{(0)},u^{(1)},\ldots,u^{(k)}$ of words, where each $u^{(j)}$ is obtained by inserting the number $c_j$ into $u^{(j-1)}$. Suppose we have already constructed $u^{(j-1)}$.  Think of the numbers in $u^{(j-1)}$ as labels of cars, and park them on the street with spots $1,\ldots,n$ via the standard parking procedure, where the car labeled $d$ has preference~$b_d$. (The label of a car is not the same as its arrival time.) 

We first claim that spot $r_{j}$ is occupied.  The columns missing from $u^{(j-1)}$ are precisely the columns of the rooks in $R$ whose rows are at least $r_j$.  If such a missing column is $c$ and its rook is $(r,c)$ with $r\geq r_j$, then $(r,c)\in B_b$, so $b_{c}>r\geq r_j$.  Thus, every column $d$ with $b_d\leq r_j$ is present in $u^{(j-1)}$.  Since $b$ is a parking content, we have $b_m\leq m$ for all $1\leq m\leq r_j$, so there are at least $r_j$ cars $d$ satisfying $b_d\leq r_j$.  One of these cars must occupy spot $r_j$. Let $d_j$ be the label of the car currently parked in spot $r_j$.  Let us construct $u^{(j)}$ by inserting $c_j$ immediately before $d_j$ in the word $u^{(j-1)}$.  Because $(r_j,c_j)\in B_b$, we have $b_{c_j}>r_j$.  Therefore, when we perform the parking procedure to $u^{(j)}$ (where each car labeled $d$ has preference $b_d$), the car labeled $c_j$ cannot park at or before spot $r_j$.  Hence, the car labeled $d_{j}$ still parks in spot $r_j$, and the car labeled $c_j$ parks to the right of the car labeled $d_j$.  

At the end of this process, we obtain a permutation $u^{(k)}\in\mathfrak S_n$. By construction, for each $1\leq j\leq k$, the number $c_j$ appears immediately before the number $d_j$ in $u^{(k)}$; say $c_j=u^{(k)}(i_j)$ so that $d_j=u^{(k)}(i_j+1)$. When we apply the parking procedure with each car labeled $d$ having preference $b_d$, the car labeled $d_j$ parks in spot $r_j$. In other words, $\sigma^{b,u^{(k)}}(i_j+1)=r_j$. Note that $i_j$ is a descent of $\sigma^{b,u^{(k)}}$ because the car labeled $c_j$ parks to the right of the car labeled $d_j$. Let $A=\{i_1,\ldots,i_k\}$.  Then $A\subseteq \Des(\sigma^{b,u^{(k)}})$, and $\Phi(u^{(k)},A)=R$.  This constructs one element of $\Phi^{-1}(R)$ from the initial word $u^{(0)}$. Consequently, $|\Phi^{-1}(R)|\geq (n-k)!$. 

The above insertion process is reversible. Indeed, let
\((\widehat w,\widehat A)\in \Phi^{-1}(R)\). For each \(j\), there is a unique
index \(i_j\in \widehat A\) such that
\[
(\sigma^{b,\widehat w}(i_j+1),\widehat w(i_j))=(r_j,c_j).
\]
Set \(d_j=\widehat w(i_j+1)\). Thus, \(c_j\) is immediately followed by \(d_j\)
in \(\widehat w\), and the car labeled \(d_j\) parks in spot \(r_j\). We recover the initial word by deleting \(c_k,c_{k-1},\ldots,c_1\) from $\widehat w$, in this
order. We claim that, just before \(c_j\) is deleted, deleting \(c_j\) is the
inverse of the \(j\)-th insertion step described above. The only deletions that
have already been performed are those of \(c_\ell\) with \(\ell>j\). Since
\((r_\ell,c_\ell)\in B_b\), we have \(b_{c_\ell}>r_\ell>r_j\), so these cars
cannot affect the occupancy of spots at or below \(r_j\) before the car \(d_j\)
arrives. Hence, after those earlier deletions, the car \(d_j\) still parks in
spot \(r_j\). Moreover, \(b_{c_j}>r_j\), and since
\(i_j\in \widehat A\subseteq \operatorname{Des}(\sigma^{b,\widehat w})\), the
car \(c_j\) parks to the right of spot \(r_j\). Therefore, inserting \(c_j\)
immediately before \(d_j\) leaves \(d_j\) parked in spot \(r_j\), exactly as in
the forward construction. This proves reversibility. 

This completes the proof of the claim and, hence, the theorem. 
\end{proof} 

\begin{figure}[ht]
  \begin{center}
  \includegraphics[height=3.5cm]{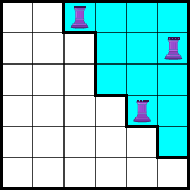}
  \end{center}
\caption{The rook placement from \cref{exam:Hopkins}.  }\label{fig:rook} 
\end{figure}

\begin{example}\label{exam:Hopkins}
    Let $n=6$ and $k=3$. Fix $b=(1,1,2,4,5,6)\in\PF_{\leq}(6)$. The $3$-rook placement $R=\{(1,3),(2,6),(4,5)\}$, with the board $B_b$ shaded in cyan, appears in \cref{fig:rook}.  
Let us choose the word $u^{(0)}=241$. The parking procedure produces the arrangement of cars \[\begin{array}{l}
    \includegraphics[height=1cm]{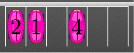}\end{array}.\]
The spot $r_1=1$ is occupied by the car labeled $2$, so we obtain $u^{(1)}$ by inserting the number $c_1=3$ into $u^{(0)}$ immediately before $2$. Hence, $u^{(1)}=3241$. 
Now the parking procedure produces the arrangement of cars \[\begin{array}{l}
    \includegraphics[height=1cm]{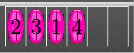}\end{array}.\]
The spot $r_2=2$ is occupied by the car labeled $3$, so we obtain $u^{(2)}$ by inserting the number $c_2=6$ into $u^{(1)}$ immediately before $3$. Hence, $u^{(2)}=63241$. 
Now the parking procedure produces the arrangement of cars \[\begin{array}{l}
    \includegraphics[height=1cm]{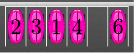}\end{array}.\]
The spot $r_3=4$ is occupied by the car labeled $4$, so we obtain $u^{(3)}$ by inserting the number $c_3=5$ into $u^{(2)}$ immediately before $4$. Hence, $u^{(3)}=632541$. Now the parking procedure produces the arrangement of cars \[\begin{array}{l}
    \includegraphics[height=1cm]{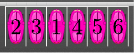}\end{array}.\] 
We compute that $i_1=2$, $i_2=1$, and $i_3=4$, so $A=\{1,2,4\}$. In addition, $\pi^{b,u^{(3)}}=(6,2,1,5,4,1)$, so $\sigma^{b,u^{(3)}}=621543$. As expected, we have $A\subseteq\Des(\sigma^{b,u^{(3)}})$, and $\Phi(u^{(3)},A)=R$. 
\end{example}

\subsection{Specializations} 
We now turn our attention to the specializations of $I_n(q,t)$ and $\widetilde I_n(q,t)$ at $q=-1$. 

Let $u=u_1\cdots u_m$ be a word. A \dfn{double descent} of $u$ is an index $i\in\{2,\ldots,m-1\}$ such that $u_{i-1}>u_i>u_{i+1}$. A permutation $w\in\mathfrak S_n$ is \dfn{simsun} if for every $j$, the word obtained from $w$ by deleting all entries greater than $j$ has no double descents. Let $\mathrm{Simsun}(n)$ denote the set of simsun permutations in
$\mathfrak S_n$. 

The following theorem resolves a conjecture of Hopkins \cite{Hopkins}. 

\begin{theorem}\label{thm:simsun}
For $n\geq 1$, we have 
\[
        I_n(-1,t)
        =
        \sum_{w\in\operatorname{Simsun}(n-1)}
        t^{n-1-\operatorname{des}(w)}.
\]
\end{theorem}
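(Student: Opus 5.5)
The plan is to prove the identity by showing that both sides satisfy the same recurrence in $n$, with the same initial values. On the tree side I will derive a $t$-refined version of the Kreweras-type recurrence for $I_n(q)$ and then specialize at $q=-1$. On the simsun side I will use the known recursive structure of simsun permutations under insertion of the largest letter (Chow--Shiu). The number of leaves is the only new ingredient on the tree side. At $q=-1$ the $q$-integers $[m]_q=1+q+\cdots+q^{m-1}$ collapse to $\mathbf 1_{m\text{ odd}}$, and this parity filter is what should match the ``no double descent'' condition.

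\textbf{Tree side.} I will decompose a tree $T\in\Tree(n+1)$ by the subtree $S$ hanging from the root that contains a distinguished vertex; I will take vertex $1$ first and try vertex $n$ if the bookkeeping is cleaner. Suppose $S$ has $i+1$ vertices. The remaining tree is a rooted tree on the other $n-i-1$ non-root labels together with $0$, after standardization. The inversions split into three parts:
\begin{itemize}
\item those inside $S$ once it is re-rooted at a suitable vertex;
\item those in the remainder;
\item a contribution from the choice of the root of $S$ among its vertices, which ranges over $\{0,1,\ldots,i\}$ and produces a factor $[i+1]_q$.
\end{itemize}
Leaves are additive over the two pieces, with a correction when $S$ or the remainder is a single vertex. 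This gives a recurrence of the form
\[
I_{n+1}(q,t)=\sum_{i}\binom{n}{i}[i+1]_q\,\widehat I_i(q,t)\,I_{n-i}(q,t),
\]
where $\widehat I$ is a small modification of $I$ accounting for the leaf correction. Setting $q=-1$ kills every term with $i$ odd. I will then translate the resulting recurrence into a differential equation for the exponential generating function $F(x,t)=\sum_n I_n(-1,t)x^n/n!$. Alternatively, I could work on the parking-function side through the Stanley--Yin identity $I_n(q,t)=\sum_{\pi}q^{\cosum(\pi)}t^{\des(\oc(\pi)^{-1})}$ and find a sign-reversing involution (toggling one entry of $\pi$ by $1$) whose fixed points are governed by descents. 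I would fall back on this only if the tree recurrence becomes messy.

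\textbf{Simsun side.} Let $S_{n}(t)=\sum_{w\in\mathrm{Simsun}(n)}t^{\des(w)}$. A simsun permutation of $[n]$ arises from one of $[n-1]$ by inserting $n$ in a position that creates no double descent: either at the end, or immediately before an ascent top, or in similar allowed slots. Tracking how the number of descents changes gives the Chow--Shiu recurrence, which is a linear differential recurrence of the form
\[
S_n(t)=\bigl(1+\alpha_n t\bigr)S_{n-1}(t)+t(1-2t)S_{n-1}'(t),
\]
with the exact coefficients to be checked. I will convert this, together with the reversal $t^{n-1-\des}$, into an equation for the corresponding exponential generating function. Finally I will check that the two generating functions satisfy the same first-order PDE in $x$ with the same initial data; the initial check is that $t=1$ gives the Euler numbers on both sides.

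\textbf{Main obstacle.} I expect the hardest step to be the tree-side recurrence with the leaf statistic. The classical decomposition is designed for inversions, and leaves interact badly with re-rooting $S$: the chosen root of $S$ may or may not be a leaf of the original tree. My first attempt is to choose the decomposition vertex so that the root of $S$ is forced to be an internal vertex whenever $|S|>1$, which makes leaves exactly additive. If that fails, I will instead set up a bijective or sign-reversing argument directly at $q=-1$. In that approach, pairing trees whose inversion counts differ by one cancels everything except trees of a rigid ``alternating'' shape. These survivors should correspond to simsun permutations via a depth-first reading, with leaves mapping to $n-1-\des$.
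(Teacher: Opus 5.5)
Your overall plan matches the paper's: the tree recurrence at $q=-1$, an exponential generating function, and the Chow--Shiu recurrence. Rederiving the tree recurrence instead of citing Stanley--Yin is fine.

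The leaf issue you call the main obstacle is actually harmless. Let $S$ be the child subtree of $0$ containing a distinguished label. Its root $r$ is the child of $0$, and any of the $i+1$ vertices of $S$ may serve as $r$. The pairs $(a,r)$ with $a<r$ then produce the factor $[i+1]_q$. No re-rooting occurs, and $r$ is internal exactly when $i\ge 1$. With the convention $I_0=1$, the only leaf correction is when the remainder is the bare root $0$. This gives
\[
I_{n+1}(q,t)=[n+1]_q\,I_n(q,t)+t\sum_{i=0}^{n-1}\binom{n}{i}[i+1]_q\,I_i(q,t)\,I_{n-i}(q,t),
\]
which is exactly the Stanley--Yin recurrence the paper uses. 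Note that the correction sits on the remainder side, not in a modified $\widehat I_i$.

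The genuine gap is your final step, ``check that the two generating functions satisfy the same first-order PDE.'' The two sides do not produce the same kind of equation.

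\textbf{Tree side.} At $q=-1$ the recurrence is quadratic and parity-filtered. For $E(z)=\sum_n I_n(-1,t)z^n/n!$ it becomes $E'(z)=E_+(z)\bigl(1-t+tE(z)\bigr)$, where $E_+$ is the even part of $E$. This is a nonlinear equation coupling $E(z)$ with $E(-z)$, and $t$ enters only as a parameter.

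\textbf{Simsun side.} The Chow--Shiu recurrence, rewritten for $A_n(t)=t^{n-1}R_{n-1}(1/t)$, reads $A_{n+1}=\bigl(1+n(t-1)\bigr)A_n+t(2-t)A_n'$. It is linear and differentiates in $t$, giving $E_z=E+(t-1)zE_z+t(2-t)E_t$.

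Nothing in your plan connects a quadratic recurrence in $n$ to this linear differential recurrence. Matching initial data cannot do it, and the $t=1$ Euler-number check is only a sanity check.

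The missing idea is to solve the tree side explicitly. Write $E_\pm$ for the even and odd parts; then $E_+'=tE_+E_-$ and $E_-'=E_+(1-t+tE_+)$. Setting $W=1-t+tE_+$ yields the first integral $W^2-t^2E_-^2=1$. So one may write $W=\cosh\theta$ and $tE_-=\sinh\theta$, with $\theta'=t-1+\cosh\theta$. Then $y=\tanh(\theta/2)$ solves $2y'=t+(2-t)y^2$. Hence $E=\frac{t+(2-t)y}{t(1-y)}$ with $y=\sqrt{t/(2-t)}\,\tan\bigl(z\sqrt{t(2-t)}/2\bigr)$.

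Only this closed form lets you verify the linear PDE. You then read off the $A_n$ recurrence coefficientwise, starting from $A_1=1=I_1(-1,t)$. Alternatively, you could solve the linear PDE by characteristics and check that the result satisfies the nonlinear equation. Either way, some explicit solution is unavoidable. Your sign-reversing-involution fallback is too unspecified to fill this gap.
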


\begin{proof}
We make the convention that $I_0(-1,t)=1$. Let $[m]_q=\frac{1-q^m}{1-q}=1+q+\cdots+q^{m-1}$. Note that $[m]_{-1}=\mathbf{1}_{m\equiv 1\!\!\pmod 2}$.
Stanley and Yin \cite{StanleyYin} proved that \[
I_n(q,t)
=
[n]_q I_{n-1}(q,t)
+
t\sum_{i=0}^{n-2}
\binom{n-1}{i}
[i+1]_q I_i(q,t)I_{n-1-i}(q,t).
\]
This specializes to
\begin{equation}
I_n(-1,t)
=
\mathbf 1_{n\equiv 1\!\!\!\!\!\!\pmod 2}I_{n-1}(-1,t)
+
t\sum_{\substack{0\leq i\leq n-2\\ i\text{ even}}}
\binom{n-1}{i}I_i(-1,t)I_{n-1-i}(-1,t).
\label{eq:minus-one-A-rec}
\end{equation}

Let
\[
        E(z):=\sum_{n\geq 0} I_n(-1,t)\frac{z^n}{n!},
        \qquad
        E_+(z):=\frac{E(z)+E(-z)}2,
        \qquad
        E_-(z):=\frac{E(z)-E(-z)}2.
\]
Multiplying \eqref{eq:minus-one-A-rec} by $z^{n-1}/(n-1)!$ and summing
over $n\geq 1$ gives
\[
        E'(z)=E_+(z)\bigl(1-t+tE(z)\bigr).
\]
Equivalently,
\[
        E_+'(z)=tE_+(z)E_-(z),
        \qquad
        E_-'(z)=E_+(z)(1-t+tE_+(z)),
\]
with $E_+(0)=1$ and $E_-(0)=0$.  Set
\[
        W(z):=1-t+tE_+(z).
\]
Then
\[
        W'(z)=tE_+'(z)=t^2E_+(z)E_-(z)=t(W(z)-1+t)E_-(z),
\]
and
\[
        E_-'(z)=E_+(z)W(z)=\frac{W(z)-1+t}{t}W(z).
\]
Thus,
\[
        \frac{d}{dz}\bigl(W(z)^2-t^2E_-(z)^2\bigr)=0.
\]
Since $W(0)=1$ and $E_-(0)=0$, we obtain
\[
        W(z)^2-t^2E_-(z)^2=1.
\]
Write $W(z)=\cosh\theta$ and $tE_-(z)=\sinh\theta$.  Then
\[
        \theta'=t-1+\cosh\theta,
        \qquad
        \theta(0)=0.
\]
With $y=\tanh(\theta/2)$, this becomes
\[
        2\frac{dy}{dz}=t+(2-t)y^2,
\]
so
\[
        y=
        \sqrt{\frac{t}{2-t}}
        \tan\!\left(\frac{z\sqrt{t(2-t)}}2\right).
\]
Finally,
\begin{equation}
        E(z)
        =
        E_+(z)+E_-(z)
        =
        \frac{e^\theta+t-1}{t}
        =
        \frac{t+(2-t)y}{t(1-y)}.
\label{eq:minus-one-EGF}
\end{equation}

Now let
\[
        R_m(x):=
        \sum_{w\in\mathrm{Simsun}(m)}
        x^{\mathrm{des}(w)}.
\]
According to \cite[Theorem~2.1]{ChowShiu}, we have
\begin{equation}
        R_{m+1}(x)
        =
        (1+mx)R_m(x)+x(1-2x)R_m'(x),
        \qquad
        R_0(x)=1.
\label{eq:simsun-rec}
\end{equation}
Define
\[
        A_n(t):=t^{n-1}R_{n-1}(1/t).
\]
Transforming \eqref{eq:simsun-rec} gives
\begin{equation}
        A_{n+1}(t)
        =
        \bigl(1+n(t-1)\bigr)A_n(t)+t(2-t)A_n'(t),
        \qquad
        A_1(t)=1.
\label{eq:B-rec}
\end{equation}
On the other hand, it follows from the closed-form expression in \eqref{eq:minus-one-EGF} that
\[
        \frac{\partial}{\partial z}E(z)
        =
        E(z)+(t-1)z\frac{\partial}{\partial z}E(z)+t(2-t)\frac{\partial}{\partial t}E(z).
\]
Extracting the coefficient of $z^n/n!$ gives exactly the recurrence
\eqref{eq:B-rec} for the coefficients $I_n(-1,t)$, with the same initial
condition.  Therefore,
\[
        I_n(-1,t)=A_n(t)
        =
        t^{n-1}R_{n-1}(1/t)=\sum_{w\in\mathrm{Simsun}(n-1)}
        t^{n-1-\mathrm{des}(w)},
\]
as desired. 
\end{proof}

A \dfn{big descent} of a permutation $w\in\mathfrak S_n$ is an index $i\in[n-1]$ such that $w(i)>w(i+1)+1$. Let $\des_1(w)$ denote the number of big descents of $w$. We say $w$ is \dfn{up-down alternating} if \[w_1<w_2>w_3<\cdots.\] Let $\mathrm{Alt}_n$ be the set of up-down alternating permutations in $\mathfrak S_n$.

The next theorem resolves yet another conjecture of Hopkins \cite{Hopkins}. 

\begin{theorem}\label{thm:alternating} 
For $n\geq 2$, we have 
\[
        \widetilde I_n(-1,t)
        =
        \sum_{w\in \Alt_n}t^{\mathrm{des}_1(w^{-1})+1}.
\]
\end{theorem}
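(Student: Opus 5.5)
The plan is to follow the template of the proof of \cref{thm:simsun}: compute an exponential generating function for $\widetilde I_n(-1,t)$, compute one for the right-hand side, and show that both satisfy the same differential equation with the same initial data. By \cref{cor:Hopkins} we may use either description of $\widetilde I_n(q,t)$. I would work with the outcome description
\[
\widetilde I_n(q,t)=\sum_{\pi\in\PF(n)}q^{\cosum(\pi)}t^{\des(\oc(\pi))},
\]
because it interacts well with the standard decomposition of a parking function according to the last car. Namely, record the spot $s$ where car $n$ parks. Spots $1,\ldots,s-1$ and $s+1,\ldots,n$ then split the remaining cars into two independent parking problems of sizes $i$ and $n-1-i$, shuffled in $\binom{n-1}{i}$ ways. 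Car $n$ itself has $[s-\text{(start of its block)}+1]$-many admissible preferences, which contributes a $q$-integer factor.

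The key point is tracking $\des(\oc(\pi))$ through this decomposition. The last letter of $\oc(\pi)$ is $s$, so it adds one descent exactly when the previous car parked to the right of $s$. I would therefore refine by an auxiliary variable recording whether car $n-1$ lies in the right block, giving a pair of coupled recurrences in the style of Stanley--Yin. The aim is a recurrence of the shape
\[
\widetilde I_n(q,t)=\sum_i\binom{n-1}{i}(\text{$q$-integer})\cdot(\text{terms in }\widetilde I_i,\widetilde I_{n-1-i}\text{ and the refined series}).
\]
Setting $q=-1$ turns every $[m]_q$ into the parity indicator $\mathbf 1_{m \text{ odd}}$. In the exponential generating function $\widetilde E(z)=\sum_n\widetilde I_n(-1,t)z^n/n!$ this should produce a Riccati-type system in the even and odd parts $\widetilde E_\pm$. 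I would then solve that system exactly as in the proof of \cref{thm:simsun}, by finding a conserved quadratic quantity and substituting $y=\tanh(\theta/2)$. This should give a tangent/secant-type closed form, which is consistent with the appearance of alternating permutations.

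On the right-hand side, let $G(z)=\sum_{n}\sum_{w\in\Alt_n}t^{\des_1(w^{-1})}z^n/n!$. I would obtain a recurrence by the standard decomposition of an up-down permutation at the position of its maximal letter $n$, which splits $w$ into two alternating pieces. The condition $w^{-1}(j)>w^{-1}(j+1)+1$ says that $j+1$ appears at least two positions to the left of $j$ in $w$. Under the split, such pairs are either internal to one piece or are consecutive values straddling the two pieces. Handling the straddling pairs requires only a bounded amount of extra bookkeeping on the extreme values of each piece, and it gives a differential equation for $G$. An alternative is to induct by inserting $n$ into an alternating permutation of $n-1$ letters. This gives a derivative recurrence of the form $A_{n+1}=(\ldots)A_n+(\ldots)A_n'$, like \eqref{eq:B-rec}, which could then be checked against a PDE satisfied by the closed form for $\widetilde E$.

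The main obstacle is the first step: getting a manageable recurrence for the descent statistic of $\oc(\pi)$, since $\des(\oc(\pi))$ is less natural than the Stanley--Yin statistic $\des(\oc(\pi)^{-1})$. If the last-car decomposition becomes unwieldy, my fallback is to work with excedances instead, using the rook expansion from \cref{lem:excedance-side} summed over contents with weights $(-1)^{\cosum(b)}/\mu(b)$. Because $B_b$ is a Ferrers board, one can then try a sign-reversing involution that changes a single $b_j$ by one and cancels contents in pairs. The surviving contents would hopefully be indexed by alternating-type data that can be matched directly with $\des_1(w^{-1})$. Matching the shift $+1$ in the exponent and the range $n\ge 2$ should come out of the initial conditions.
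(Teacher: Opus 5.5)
Your argument has a genuine gap at exactly the step you flag, and the repair you propose (recording whether car $n-1$ lies in the right block) cannot close the recurrence. In the last-car decomposition, $\oc(\pi)$ is an interleaving of the left block's outcome word (values $<s$) with the right block's outcome word (values $>s$), followed by $s$. The descent number of such an interleaving is not the sum of the two block descent numbers plus a boundary term at the end. Every place where a right-block car is immediately followed by a left-block car is a descent. Also, consecutive letters of one block stop being adjacent once letters of the other block sit between them. Already for $n=3$ and $s=2$, the outcome $312$ (from $\pi=(3,1,2)$) has one descent, although both blocks are single letters and car $2$ lies in the left block.

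Summing over the $\binom{n-1}{i}$ interleavings therefore gives an Eulerian shuffle convolution, not a product of $\widetilde I_i$ and $\widetilde I_{n-1-i}$. Nothing in your sketch supports the existence of the Riccati system you anticipate.

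The right-hand side has the same defect. If $w=L\,n\,R\in\Alt_n$, then every $j$ with $j\in R$ and $j+1\in L$ is a big descent of $w^{-1}$. So $\des_1(w^{-1})$ depends on the whole interleaving of the value sets of $L$ and $R$, not on a bounded amount of data about extreme values.

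Your fallback also fails as stated. Two contents differing in one entry carry different weights $\rr_k(B_b)(n-k)!/\mu(b)$, so they do not cancel. For instance, when $n=2$ the contents $(1,1)$ and $(1,2)$ contribute $-1$ and $1+t$.

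The missing idea is to apply your $q$-integer observation to every car at once, conditioning on the whole outcome rather than only on the last spot. For fixed $\sigma\in\mathfrak S_n$, the parking functions with $\oc(\pi)=\sigma$ are exactly those with $\ell_i(\sigma)\le\pi_i\le\sigma(i)$ for all $i$. Here $\ell_i(\sigma)-1$ is the largest spot below $\sigma(i)$ (or $0$) not occupied by cars $1,\dots,i-1$. Because this fiber is a product of intervals, the signed sum of $(-1)^{\cosum(\pi)}$ over it equals $1$ if every interval has odd length and $0$ otherwise. This yields $\widetilde I_n(-1,t)=\sum_{\sigma\in\mathfrak O_n}t^{\des(\sigma)}$ directly, with no recurrence at all. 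The cancellation preserves $\des(\oc(\pi))$ but not $\exced(\pi)$, which is why the outcome form from \cref{cor:Hopkins} is the one to use.

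The paper then shows that the inverses of permutations in $\mathfrak O_n$ are characterized by a parity condition with a simple recursive structure: the maximum sits in an odd position, and both sides again lie in the class. After complementation these are exactly the Jacobi permutations. The proof finishes with Petersen and Zhuang's identity $tJ_n(t)=\sum_{w\in\Alt_n}t^{\des_1(w^{-1})+1}$ together with the palindromicity of $J_n(t)$. Without this reduction, or an actual derivation of both generating functions, your proposal is a program rather than a proof.
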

\begin{proof}
By \cref{cor:Hopkins}, we know that 
\[
        \widetilde I_n(-1,t)
        =
        \sum_{\pi\in \PF(n)}
        (-1)^{\operatorname{cosum}(\pi)}
        t^{\operatorname{des}(\operatorname{oc}(\pi))}.
\]
We group the terms in this sum by parking outcome.  For
$\sigma\in \mathfrak S_n$, let \[
        \ell_i(\sigma)
        :=
        1+\max\{r\in\mathbb N_0\setminus\{\sigma(1),\ldots,\sigma(i-1)\}:r<\sigma(i)\},
\]
where $\mathbb N_0=\mathbb N\cup\{0\}$. A parking function $\pi=(\pi_1,\ldots,\pi_n)\in\PF(n)$ has parking outcome $\oc(\pi)=\sigma$ if and only if $\ell_i(\sigma)\leq \pi_i\leq \sigma(i)$ for all $1\leq i\leq n$. Therefore,
\[
\sum_{\substack{\pi\in\PF(n) \\ \operatorname{oc}(\pi)=\sigma}}
(-1)^{\operatorname{cosum}(\pi)}
=
(-1)^{\binom{n+1}{2}}
\prod_{i=1}^n
\sum_{a=\ell_i(\sigma)}^{\sigma(i)}(-1)^a.
\]
The right-hand side is $0$ unless
$\sigma(i)\equiv\ell_i(\sigma)\pmod 2$ for all $i$, in which case it is
\[
        (-1)^{\binom{n+1}{2}}
        (-1)^{\sigma(1)+\cdots+\sigma(n)}
        =
        1.
\]
Thus,
\begin{equation}
        \widetilde I_n(-1,t)
        =
        \sum_{\sigma\in\mathfrak O_n}
        t^{\operatorname{des}(\sigma)},
\label{eq:O-sum}
\end{equation}
where $\mathfrak O_n$ is the set of permutations $\sigma\in \mathfrak S_n$ such that $\sigma(i)\equiv\ell_i(\sigma)\pmod 2$ for every $i$.

For $p\in[n]$ and $\tau\in\mathfrak S_n$, define
\[
        \alpha_{\tau}(p):=
        \max(\{j\in[n]:j<p,\,\tau(j)>\tau(p)\}\cup\{0\}).
\]
Let $\mathfrak T_n$ be the set of permutations in $\mathfrak S_n$ such that $p-\alpha_\tau(p)$ is odd for every $p\in[n]$. If $p=\sigma(i)$, then
$\ell_i(\sigma)=\alpha_{\sigma^{-1}}(p)+1$.
It follows that $\mathfrak T_n=\{\sigma^{-1}:\sigma\in\mathfrak O_n\}$. Then \eqref{eq:O-sum} becomes
\begin{equation}
        \widetilde I_n(-1,t)
        =
        \sum_{\tau\in\mathfrak T_n}
        t^{\operatorname{des}(\tau^{-1})}.
\label{eq:T-ret}
\end{equation}

The class $\mathfrak T=\bigcup_{n\geq 0}\mathfrak T_n$ has a simple recursive description.  Suppose $\tau\in\mathfrak T_n$. If $n=\tau(p)$, then $\alpha_\tau(p)=0$, so $p$ must be odd.  Thus, the
subword to the left of $n$ has even length.  Moreover, deleting $n$ splits
$\tau$ into left and right
subwords whose standardizations belong to $\mathfrak T$.  Conversely, if $\tau=u\,n\,v$ is such that the left subword $u$ has even length and the standardizations of $u$ and $v$
belong to $\mathfrak T$, then 
$\tau\in\mathfrak T_n$. Let $w_\circ$ denote the decreasing permutation in $\mathfrak S_n$. Then
$(w_\circ\tau)(i)=n+1-\tau(i)$.  The recursive description just obtained says
precisely that the set $\{w_\circ\tau:\tau\in\mathfrak T_n\}$ is the set
$\mathfrak J_n$ of \dfn{Jacobi permutations} in $\mathfrak S_n$, as discussed by Petersen and
Zhuang \cite{PZ}. It follows from \eqref{eq:T-ret} that
\[
        \widetilde I_n(-1,t)
        =
        t^{n-1}
        \sum_{\gamma\in\mathfrak J_n}t^{-\operatorname{des}(\gamma^{-1})}.
\]

Let 
\[
J_n(t):=\sum_{\gamma\in\mathfrak J_n}t^{\operatorname{des}(\gamma^{-1})}.
\]
Petersen and Zhuang \cite{PZ} showed that the so-called \emph{zig-zag Eulerian polynomial} $Z_n(t)$ satisfies \[Z_n(t)=tJ_n(t)=\sum_{w\in\operatorname{Alt}_n}
        t^{\operatorname{des}_1(w^{-1})+1};
\]
they also proved that the $J_n(t)$ is palindromic of degree $n-2$.  Therefore,
\[
        \widetilde I_n(-1,t)
        =
        t^{n-1}J_n(1/t)
        =
        tJ_n(t)
        =
        Z_n(t)
        =
        \sum_{w\in\operatorname{Alt}_n}
        t^{\operatorname{des}_1(w^{-1})+1},
\]
as desired.
\end{proof} 

\section{Centralizers in the Plactic Monoid}\label{sec:plactic}  
Given a word $w$, we are interested in the Robinson--Schensted--Knuth (RSK) insertion tableau $P(w)$, which is a semistandard Young tableau. Two words $v,w$ are \dfn{Knuth equivalent}, written $v\equiv w$, if one can be obtained from the other by a sequence of \dfn{Knuth relations}, which state that for $u,v\in\mathbb N^*$ and $a,b,c\in\mathbb N$, we have
\[
uacbv\equiv ucabv \quad \text{if}\quad a\leq b<c\] and \[ubacv\equiv ubcav \quad \text{if}\quad a<b\leq c.
\]
Knuth proved that $v\equiv w$ if and only if $P(v)=P(w)$ \cite{Knu70}. The \dfn{plactic monoid} is the quotient of the free monoid $\mathbb N^*$ by Knuth equivalence; it was introduced in this form by Lascoux and Sch\"utzenberger \cite{LS81}. We use standard facts about RSK and jeu-de-taquin; convenient references are the works of Sagan \cite{Sag01,Sag20} and Stanley \cite{Sta24}.

For a word $u$, Sagan and Wilson \cite{SW25} defined its \dfn{plactic centralizer} to be the set
\[
C(u)=\{w\in \mathbb N^*: uw\equiv wu\}=\{w\in \mathbb N^*: P(uw)=P(wu)\}.
\]
They used this object to study when certain cut-and-paste operations on reading words preserve a Knuth class, and they established several necessary conditions and examples before posing conjectures. Later work of Sagan and Zhao \cite{SZ25} considers a related stability problem for plactic centralizers of powers of a fixed word.  

Rows of tableaux are numbered from top to bottom. If $T$ is a semistandard Young tableau, $\rw(T)$ is its row word, read from left to right along each row, starting with the bottom row and moving upward. Then $P(\rw(T))=T$. For a positive integer $m$, let $T_{\leq m}$ be the subtableau of $T$ consisting of entries at most $m$, and let $T_{>m}$ be the complementary skew tableau. 

We will use the following result due to Sagan and Wilson. 

\begin{lemma}[{\cite[Corollary~3.2]{SW25}}]\label{lem:SW} 
Let $u\in\mathbb N^*$, and let $w\in C(u)$. If a letter $b$ does not appear in $u$, then no $b$ can be bumped while forming $P(wu)$ from $P(w)$ by row insertion. 
\end{lemma}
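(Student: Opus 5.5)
The plan is to compare the positions of the $b$'s in three tableaux, $P(w)$, $P(wu)$ and $P(uw)$, using a single monotone quantity. For a semistandard tableau $T$, let $s_b(T)$ denote the sum of the row indices of all cells of $T$ containing $b$. Since $b$ does not appear in $u$, the $b$'s in each of $P(w)$, $P(wu)$ and $P(uw)$ are exactly the copies of $b$ contributed by $w$, so all three tableaux contain the same number of $b$'s. I will show that
\[
s_b(P(uw))\le s_b(P(w))\le s_b(P(wu)),
\]
with the second inequality strict as soon as some $b$ is bumped while forming $P(wu)$ from $P(w)$. The hypothesis $w\in C(u)$ gives $P(uw)=P(wu)$, so both inequalities are equalities. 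Hence no $b$ is bumped.

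First I prove the second inequality by tracking individual copies of $b$ through the row insertions of the letters of $u$ into $P(w)$. An inserted letter of $u$ is never $b$, and a bumped letter is always strictly larger than the letter that bumps it. When a letter $a<b$ enters a row, it may bump a $b$, and that $b$ then moves to a strictly lower row. When a bumped $b$ enters a row, it displaces the leftmost entry strictly greater than $b$, so it never bumps another $b$. Consequently no copy of $b$ ever moves upward, and each bump of a $b$ moves that copy strictly down. This gives $s_b(P(w))\le s_b(P(wu))$, with equality exactly when no $b$ is bumped. This step is routine bookkeeping.

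Second, for the first inequality I use the jeu-de-taquin description of the product in the plactic monoid. Form the skew tableau $P(u)\ast P(w)$ in which $P(w)$ occupies the top rows, shifted right past the width of $P(u)$, and $P(u)$ sits below it and to the left. Its row word is $\rw(P(u))\,\rw(P(w))$, which is Knuth equivalent to $uw$. Therefore its rectification is $P(uw)$. The copies of $b$ all lie in the $P(w)$ part, in exactly the rows they occupy in $P(w)$. A jeu-de-taquin slide moves each entry only up or left, so row indices never increase, and $s_b(P(uw))\le s_b(P(w))$.

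The main point to check carefully is this second step: that the block placement really puts the rows of $P(w)$ at row indices $1,\ldots,\ell(P(w))$ with no downward offset. The relevant standard facts are that the row word of this skew tableau is Knuth equivalent to $uw$ and that rectification yields $P$ of the row word. If this convention caused trouble, I would instead realize $P(uw)$ by inserting $w$ into $P(u)$ and argue via Greene's theorem applied to the restrictions to letters $<b$ and $\le b$. I expect the jeu-de-taquin route to be the cleaner one. Combining the two inequalities with $P(uw)=P(wu)$ then finishes the proof.
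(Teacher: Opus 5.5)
Your proof is correct. The paper gives no argument of its own for this statement; it imports it from Sagan--Wilson \cite[Corollary~3.2]{SW25}. So what you have written is a self-contained replacement for a citation, not a variant of an in-paper proof.

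Both halves of your sandwich check out. On the row-insertion side, a $b$ can only be displaced by a strictly smaller letter. When re-inserted into the next row, it displaces the leftmost entry strictly greater than $b$ or is appended, so it never displaces another $b$. Hence $s_b(P(wu))-s_b(P(w))$ is exactly the number of times a $b$ is bumped.

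On the other side, let $\kappa$ and $\nu$ be the shapes of $P(u)$ and $P(w)$. The skew tableau $P(u)\ast P(w)$ has inner shape $(\kappa_1^{\ell(\nu)})$ and outer shape $(\kappa_1+\nu_1,\ldots,\kappa_1+\nu_{\ell(\nu)},\kappa_1,\ldots,\kappa_{\ell(\kappa)})$. So the rows of $P(w)$ really are rows $1,\ldots,\ell(\nu)$, with no downward offset. Its row word is $\rw(P(u))\,\rw(P(w))\equiv uw$, rectification returns $P$ of the row word, and forward slides move entries only up or left. This is the same ``place $P(x)$ southwest and rectify'' device the paper uses in the proof of \cref{lem:barrier}. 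Your concern about conventions can therefore be dropped, and the Greene's-theorem fallback is not needed.

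If you want to avoid jeu de taquin altogether, you can get $s_b(P(uw))\le s_b(P(w))$ another way: column-insert the letters of $u$, from last to first, into $P(w)$. An entry bumped out of row $i$ of one column lands in a row at most $i$ of the next column. This is because the entry in row $i$ of that next column, if it exists, is at least as large; otherwise that column is shorter than $i$.

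Your route uses only the hypothesis $P(uw)=P(wu)$ together with two monotonicity facts, and it isolates the mechanism cleanly. Appending letters different from $b$ pushes $b$'s down, prepending them pulls $b$'s up, and commutation forces both movements to be trivial.
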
 

We will also need the following classical theorem due to Greene. 

\begin{theorem}[{\cite{Gre74}}]\label{thm:Greene} 
Let $w$ be a word, and let $\lambda=(\lambda_1,\ldots,\lambda_\ell)$ be the shape of $P(w)$. For every $k\geq 1$, the maximum size of a subword of $w$ that can be decomposed into $k$ weakly increasing subwords is $\lambda_1+\cdots+\lambda_k$. Let $\lambda'=(\lambda_1',\ldots,\lambda_{\ell'}')$ be the conjugate of $\lambda$. For every $k\geq 1$, the maximum size of a subword of $w$ that can be decomposed into $k$ strictly decreasing subwords is $\lambda'_1+\cdots+\lambda'_k$. 
\end{theorem}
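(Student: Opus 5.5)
The plan is the classical route: show that the two families of Greene invariants are constant on Knuth classes, and then compute them directly on the row word of the insertion tableau. For a word $w$, let $I_k(w)$ be the maximum size of a subword that is a union of $k$ weakly increasing subwords, and let $D_k(w)$ be the analogous maximum for $k$ strictly decreasing subwords. Since $w\equiv \rw(P(w))$ by Knuth's theorem, it suffices to prove two things: that $I_k$ and $D_k$ are invariant under each elementary Knuth relation, and that for every semistandard tableau $T$ of shape $\lambda$ we have $I_k(\rw(T))=\lambda_1+\cdots+\lambda_k$ and $D_k(\rw(T))=\lambda_1'+\cdots+\lambda_k'$.

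The tableau computation is the easy step.

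\begin{itemize}
\item \emph{Lower bound for $I_k$.} The rows of $T$ appear as contiguous weakly increasing blocks in $\rw(T)$. Taking the top $k$ rows gives $I_k\geq \lambda_1+\cdots+\lambda_k$.
\item \emph{Upper bound for $I_k$.} The entries of any column of $T$ appear in $\rw(T)$ in strictly decreasing order, read bottom to top. So a weakly increasing subword contains at most one entry from each column. Hence $k$ of them cover at most $\min(k,\lambda_j')$ entries of column $j$, and summing over $j$ gives exactly $\lambda_1+\cdots+\lambda_k$.
\item \emph{Lower bound for $D_k$.} Each column of $T$ is a strictly decreasing subword of $\rw(T)$. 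Taking the $k$ longest columns gives $D_k\geq\lambda_1'+\cdots+\lambda_k'$.
\item \emph{Upper bound for $D_k$.} Each row of $T$ is weakly increasing, so a strictly decreasing subword meets each row at most once. Then $k$ of them meet row $i$ at most $\min(k,\lambda_i)$ times, and the sum is $\lambda'_1+\cdots+\lambda'_k$.
\end{itemize}

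The main obstacle is Knuth invariance. I treat the relation $uacbv\equiv ucabv$ with $a\leq b<c$; the other relation is symmetric. Every weakly increasing subword of $ucabv$ is still weakly increasing in $uacbv$, since the only change is the relative order of $a$ and $c$, and $c>a$ forbids using both. This gives $I_k(ucabv)\leq I_k(uacbv)$.

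For the reverse inequality, take an optimal family of $k$ disjoint weakly increasing subwords of $uacbv$. The only problematic case is a subword $S$ that uses both $a$ and $c$ consecutively. There are two subcases.

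\begin{itemize}
\item If $b$ is unused by the family, I would replace $c$ by $b$ in $S$. This is valid because $a\leq b$, and every later letter of $S$ is at least $c>b$.
\item If $b$ lies in another subword $S'$, I would exchange tails. Let $S$ become (prefix of $S$ up to $a$)$\,b\,$(tail of $S'$ after $b$), and let $S'$ become (prefix of $S'$ before $b$)$\,c\,$(tail of $S$ after $c$).
\end{itemize}

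In the second subcase, the new $S$ is weakly increasing because $a\leq b$. The new $S'$ needs the last letter $x$ of $S'$ before $b$ to satisfy $x\leq c$; this holds because $x\leq b<c$. Neither new subword uses both $a$ and $c$, so the family remains valid in $ucabv$ with the same total size.

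The same style of case analysis handles $D_k$. Here the danger is a strictly decreasing subword containing $c$ then $a$ in $ucabv$. It is repaired by substituting or exchanging with $b$, now using $b<c$ in place of $a\leq b$ where needed. I expect this bookkeeping, namely checking every configuration of which subwords contain $a$, $b$, and $c$, to be the step where care is required. The shape computation above is routine.
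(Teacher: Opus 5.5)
The paper gives no proof of this statement: it quotes it from \cite{Gre74} and uses it as a black box, so there is no in-paper argument to compare against. Your proof is the standard one: Knuth invariance of $I_k$ and $D_k$, followed by evaluation on $\rw(P(w))$, as in e.g.\ \cite{Sag01}. It is correct.

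The four tableau bounds are right. Your tail exchange for $I_k$ under $uacbv\equiv ucabv$ is valid, including the positional bookkeeping you left implicit. Since $b\notin S$, the part of $S$ after $c$ lies in $v$. Since $a,c\notin S'$, the part of $S'$ before $b$ lies in $u$.

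The steps you only sketched also go through.
\begin{itemize}
\item The ``other relation'' does follow by symmetry. The map $\RC_m$ carries each instance of $acb\equiv cab$ ($a\le b<c$) to an instance of $b'a'c'\equiv b'c'a'$ ($a'<b'\le c'$). It also preserves both the weakly increasing and the strictly decreasing property, so $I_k$ and $D_k$ are $\RC_m$-invariant.
\item In the $D_k$ repair for $ucabv$ with $b$ unused, you must replace $a$ by $b$, not $c$. Replacing $c$ would leave the pair $b\,a$, which fails when $a=b$. Replacing $a$ gives $c\,b$ followed by a tail with letters $<a\le b$, which is fine.
\item When $b\in S'$, the exchange yields $(\text{prefix of }S)\,c\,b\,(\text{tail of }S')$ and $(\text{prefix of }S')\,a\,(\text{tail of }S)$. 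The second is valid because the letter preceding $b$ in $S'$ exceeds $b\ge a$.
\end{itemize}

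Finally, the fact $w\equiv\rw(P(w))$ you rely on is the elementary half of Knuth's theorem: row insertion is realized by Knuth moves. It is not the half that some treatments derive from Greene's theorem, so no circularity arises.
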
 

\subsection{Bounding Entries}

The following theorem was conjectured by Sagan and Wilson~\cite{SW25}. 

\begin{theorem}\label{thm:plactic1} 
Let $u$ be a nonempty word, and let $m$ be the maximum letter in $u$. Let $\ell$ be the number of rows of $P(u)$. If $w\in C(u)$, then all entries in the first $\ell$ rows of $P(w)$ are at most $m$. 
\end{theorem}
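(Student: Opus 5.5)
The plan is to combine \cref{lem:SW} with a comparison of the two shapes $P(wu)=P(uw)$. Call a letter \emph{big} if it exceeds $m$. Big letters do not occur in $u$, so by \cref{lem:SW} no big letter is bumped while forming $P(wu)$ from $P(w)$ by successively row-inserting the letters of $u$.

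\emph{Step 1: big letters are frozen.} Suppose a letter $x\le m$ of $u$, or a small letter bumped during an insertion, enters a row that contains a big letter. Then the row's entries exceeding $x$ include that big letter. Because no big letter may be bumped, the entry bumped from this row must be small, so $x$ never comes to rest at the end of this row. Consequently every bumping path terminates in a row containing no big letter. From this I will conclude two things. First, the big letters of $P(wu)$ occupy exactly the same cells, with the same values, as the big letters of $P(w)$. Second, no row of $P(w)$ that contains a big letter gains a new cell. Since $P(wu)=P(uw)$, the big cells of $P(uw)$ are therefore the big cells of $P(w)$. In addition, the small part $P(uw)_{\le m}=P(uw_{\le m})$ has the same shape $\nu$ as $P(w_{\le m}u)$.

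\emph{Step 2: set up a contradiction.} Suppose some row $i\le \ell$ of $P(w)$ contains a big letter. Let $(r,j)$ be the highest big cell in the leftmost column $j$ that contains a big letter in some row $\le \ell$, so that $r\le \ell$. By column strictness, column $j$ of $P(w)$ is small in rows $1,\dots,r-1$ and big in rows $r,\dots,\lambda'_j$. By Step 1, this column is the same in $P(uw)$. I then intend to apply \cref{thm:Greene}, in its strictly-decreasing form with $j$ columns, to the word $wu$. Take the strictly decreasing subwords of $w$ read off columns $1,\dots,j-1$ of $P(w)$. Take also the big part of column $j$ of $P(w)$, and follow it by the length-$\ell$ strictly decreasing subword of $u$ coming from the first column of $P(u)$. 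This is legitimate because all letters of $u$ are smaller than the big ones. These $j$ strictly decreasing subwords are disjoint, so the first $j$ columns of $P(wu)$ have total length at least
\[\lambda'_1+\cdots+\lambda'_j-(r-1)+\ell\ \ge\ \lambda'_1+\cdots+\lambda'_j+1,\]
where $\lambda'$ is the conjugate of the shape of $P(w)$.

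\emph{Step 3: show the growth is impossible.} On the other hand, Step 1 restricts where the new cells of $P(wu)$ can appear in columns $1,\dots,j$. They cannot appear in rows $r,\dots,\lambda'_j$, because these rows contain big letters. By the choice of $j$, columns $1,\dots,j-1$ have no big letters in rows $\le\ell$. The crux is to show that the excess forced in Step 2 cannot be absorbed in the remaining places: rows $<r$, which in columns $\le j$ are already filled, and rows below $\lambda'_j$, which are confined to columns $<j$. To do this, I would exploit $P(wu)=P(uw)$ once more. I would compute the first-$j$-columns Greene invariant of $uw$ directly. In $uw$ the letters of $u$ precede the big letters of $w$, so the $u$-column cannot be concatenated after the big column. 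I expect this to bound the invariant by the $P(w)$ value plus only what the small letters contribute, and then to compare this with the frozen big cells.

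I expect Step 3 to be the main obstacle, namely pinning down exactly which invariant distinguishes $wu$ from $uw$. If the column count is not sharp enough, I would instead restrict to the alphabet $\{1,\dots,m\}\cup\{b\}$ for a single big letter $b$, whose occurrences form a horizontal strip. I would then argue about the lowest row containing $b$ via jeu de taquin, rectifying the skew tableau $P(u)\ast P(w)$ to $P(uw)$. In that rectification, the big letters of $P(w)$ must move strictly down past the $\ell$ rows occupied by $P(u)$. This contradicts the freezing established in Step 1.
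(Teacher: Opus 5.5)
Your Step 1 is correct and is how the paper uses \cref{lem:SW}. Your Step 2 count is also valid once you replace $w$ by $\rw(P(w))$, since columns of $P(w)$ are decreasing subwords of $\rw(P(w))$, not of $w$. However, there is a genuine gap at Step 3, which you correctly flag as the crux. What you offer there is not an argument, and the strategy as stated cannot work. With a single copy of $u$, Step 2 forces only an excess of $\ell-r+1$ cells in the first $j$ columns. Nothing you list prevents that excess from appearing as new cells in rows below $\lambda'_j$.

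A concrete instance: take $u=31$ (so $m=3$, $\ell=2$) and $P(w)$ with rows $(1,2)$ and $(3,4)$.
\begin{itemize}
\item Inserting $u$ bumps no big letter, and $j=r=2$.
\item The forced extra cell simply lands at $(3,1)$, giving $P(wu)$ with rows $(1,1,3),(2,4),(3)$.
\item Every constraint you list for Step 3 is satisfied.
\item What rules this $w$ out is only that $P(uw)\neq P(wu)$, and you give no mechanism for getting a contradiction from the $uw$ side.
\end{itemize}
The jeu de taquin fallback does not supply one either. When rectifying the skew tableau with reading word $uw$, entries only slide up or left, so nothing pushes big letters of $P(w)$ down past $P(u)$. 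Moreover, by Step 1, ``big letters of $P(uw)$ end below row $\ell$'' is equivalent to the theorem itself.

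The missing idea is amplification, which is what the paper uses.
\begin{itemize}
\item Since $w\in C(u)$ implies $w\in C(u^N)$ for every $N$, your freezing argument applies to inserting $u^N$.
\item Suppose row $r\le \ell$ of $P(w)$ contains a big letter and $h$ small letters. Then row $r$ of $P(w_{\le m}u^N)=P(wu^N)_{\le m}$ has length exactly $h$ for all $N$.
\item Restricting to letters $\le m$ caps every column height at $m$, and only the first $h$ columns can reach row $r$. So the first $N$ columns of $P(w_{\le m}u^N)$ have total length at most $hm+(N-h)(r-1)$.
\item Greene's theorem, applied to $N$ disjoint copies of a length-$\ell$ strictly decreasing subword of $u$, gives a total of at least $\ell N$.
\item Since $r\le \ell$, these two bounds are incompatible for large $N$.
\end{itemize}
A single copy of $u$ produces a bounded excess that can hide in lower rows. $N$ copies produce an excess linear in $N$, which has nowhere to go.
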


\begin{proof}
Fix $w\in C(u)$, and let $P(w)=T$. Suppose by way of contradiction that row $r$ of $T$ contains an entry larger than $m$, where $1\leq r\leq \ell$. Let $h$ be the number of entries in row $r$ of $T$ that are at most $m$. 

Since $w\in C(u)$, we have $w\in C(u^N)$ for every $N\geq 1$. Every entry larger than $m$ is absent from $u^N$, so \cref{lem:SW} implies that no entry larger than $m$ can be bumped while forming $P(wu^N)$ by inserting $u^N$ into $T$.

All letters inserted from $u^N$ are at most $m$. Since row $r$ contains an entry larger than $m$, any letter from $[m]$ inserted into row $r$ cannot be appended to the end of the row, so it must bump some entry. We have established that this bumped entry cannot be larger than $m$. This shows that the number of entries at most $m$ in row $r$ remains fixed during the insertion of $u^N$. Hence, the number of entries in row $r$ of $P(wu^N)$ that are at most $m$ is exactly $h$ for every $N\geq 1$.

By the standard restriction property of RSK,
\[
P(wu^N)_{\leq m}=P(w_{\leq m}u^N),
\]
where $w_{\leq m}$ denotes the subword of $w$ consisting of the entries that are at most $m$. Therefore the $r$-th row of $P(w_{\leq m}u^N)$ has length $h$ for every $N$.

Because $P(u)$ has $\ell$ rows, we know by \cref{thm:Greene} that there is a strictly decreasing subword of $u$ of length $\ell$. Therefore, $w_{\leq m}u^N$ contains $N$ disjoint strictly decreasing subwords of length $\ell$. Let $\lambda^{(N)}$ be the shape of $P(w_{\leq m}u^N)$. According to \cref{thm:Greene}, we have
\[
(\lambda^{(N)})'_1+\cdots+(\lambda^{(N)})'_N\geq \ell N. 
\]
But all entries of $P(w_{\leq m}u^N)$ are at most $m$, so each column has height at most $m$. Since the $r$-th row has length $h$, only the first $h$ columns can have height at least $r$. Thus, for $N\geq h$, we have
\[
\ell N\leq (\lambda^{(N)})'_1+\cdots+(\lambda^{(N)})'_N
\leq hm+(N-h)(r-1).
\]
This is impossible when $N$ is sufficiently large because $r\leq \ell$. 
\end{proof} 

\subsection{Evacuation} 
Define the \dfn{$m$-reverse complement} of a word $w=w_1\cdots w_n\in[m]^*$ to be the word 
\[
\RC_m(w)=(m-w_n+1)\cdots(m-w_1+1).
\]
For a tableau $T$ with entries in $[m]$, define
\[
\epsilon_m(T)=P(\RC_m(\rw(T))).
\]
This $\epsilon_m$ is the \dfn{$m$-evacuation} map used by Sagan and Wilson \cite{SW25}. Greene's theorem implies that $\epsilon_m(T)$ has the same shape as $T$; see \cite[Lemma~6.4]{SW25}. For an arbitrary tableau $T$, define $\tau_m(T)$ by replacing $T_{\leq m}$ by $\epsilon_m(T_{\leq m})$ and leaving the skew tableau $T_{>m}$ fixed. Since $\epsilon_m$ preserves shape, this replacement is well defined. 

Sagan and Wilson \cite{SW25} conjectured that 
\begin{equation}\label{eq:SW_Conj_6.5}
P(C(\RC_m(u)))=\tau_m(P(C(u))) 
\end{equation} for every $u\in [m]^*$. 
We will prove this below in \cref{thm:6.5}. We first need some lemmas. 

The first lemma we need states that $\RC_m$ is an involutive anti-automorphism of the plactic monoid restricted to words in $[m]^*$. 
\begin{lemma}\label{lem:rc}
For $x,y\in[m]^*$, we have $\RC_m(xy)=\RC_m(y)\RC_m(x)$. Moreover, $x\equiv y$ if and only if $\RC_m(x)\equiv \RC_m(y)$.
\end{lemma}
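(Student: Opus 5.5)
The first assertion follows directly from the definition. If $x=x_1\cdots x_p$ and $y=y_1\cdots y_q$, then $\RC_m(xy)$ lists the letters $m+1-z$ for the letters $z$ of $xy$ in reverse order. Writing these out, the letters coming from $y$ appear first (reversed and complemented), followed by those coming from $x$. This is precisely $\RC_m(y)\RC_m(x)$. In particular, $\RC_m$ is an involution on $[m]^*$, since reversing and complementing twice returns the original word.

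For the second assertion, the plan is to show that $\RC_m$ carries each elementary Knuth relation between words in $[m]^*$ to another elementary Knuth relation. Since Knuth equivalence is generated by these relations, and $\RC_m$ maps $[m]^*$ into itself, this gives $x\equiv y\Rightarrow \RC_m(x)\equiv\RC_m(y)$. The converse then follows by applying the same implication to $\RC_m(x)$ and $\RC_m(y)$ and using $\RC_m\circ\RC_m=\mathrm{id}$.

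By the anti-automorphism property, applying $\RC_m$ to $u\,s\,v$, where $s$ is a three-letter factor, gives $\RC_m(v)\,\RC_m(s)\,\RC_m(u)$. So it suffices to examine the three-letter factors. Write $\bar a=m+1-a$, which reverses inequalities: $a\le b$ iff $\bar a\ge\bar b$.

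\emph{First relation.} Consider $acb\equiv cab$ with $a\le b<c$. Its images are $\bar b\bar c\bar a$ and $\bar b\bar a\bar c$, where $\bar c<\bar b\le\bar a$. Renaming $A=\bar c$, $B=\bar b$, $C=\bar a$ gives $A<B\le C$, and the images become $BAC$ and $BCA$. This is exactly an instance of the second relation, $ubacv\equiv ubcav$ for $a<b\le c$.

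\emph{Second relation.} Consider $bac\equiv bca$ with $a<b\le c$. Its images are $\bar c\bar a\bar b$ and $\bar a\bar c\bar b$, where $\bar c\le\bar b<\bar a$. Renaming $A=\bar c$, $B=\bar b$, $C=\bar a$ gives $A\le B<C$, and the images become $ACB$ and $CAB$. This is an instance of the first relation.

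The only point needing care is keeping track of which inequality is strict, since complementation swaps the roles of the strict and weak inequalities. As checked above, this swap exactly interchanges the two families of Knuth relations. There is no real obstacle beyond this bookkeeping.
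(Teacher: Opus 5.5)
Your proof is correct and is the same argument the paper has in mind: the paper just declares the lemma immediate from the definitions of $\RC_m$ and the Knuth relations, and your explicit check is the verification it leaves to the reader. That check shows that reversal-plus-complementation interchanges the two families of Knuth relations, swapping which inequality is strict, and that involutivity gives the converse. The one implicit step is that every word in a Knuth chain from $x$ to $y$ stays in $[m]^*$; this holds because Knuth relations only rearrange letters.
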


\begin{proof}
This is immediate from the definition of $\RC_m$ and the definitions of Knuth relations. 
\end{proof}

\begin{lemma}\label{cor:small}
Let $u,w\in[m]^*$, and assume $w\in C(u)$. We have $\RC_m(w)\in C(\RC_m(u))$ and $\epsilon_m(P(w))\in P(C(\RC_m(u)))$. 
\end{lemma}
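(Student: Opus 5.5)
The plan is to derive both claims directly from \cref{lem:rc}, checking membership in $C(\RC_m(u))$ through the definition $C(x)=\{y: xy\equiv yx\}$.

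\emph{First claim.} Since $w\in C(u)$ and $u,w\in[m]^*$, we have $uw\equiv wu$ with both sides in $[m]^*$. Applying $\RC_m$ and using the ``only if'' direction of \cref{lem:rc} gives $\RC_m(uw)\equiv\RC_m(wu)$. The anti-automorphism property in \cref{lem:rc} rewrites this as
\[\RC_m(w)\RC_m(u)\equiv\RC_m(u)\RC_m(w),\]
which says exactly that $\RC_m(w)\in C(\RC_m(u))$.

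\emph{Second claim.} By definition, $\epsilon_m(P(w))=P(\RC_m(\rw(P(w))))$, so it suffices to produce a word in $C(\RC_m(u))$ whose insertion tableau is this tableau. The natural candidate is $\rw(P(w))$ itself, which is a word in $[m]^*$.
\begin{enumerate}
\item Since $P(\rw(P(w)))=P(w)$, Knuth's theorem gives $\rw(P(w))\equiv w$.
\item Knuth equivalence is a congruence on the free monoid, so $u\,\rw(P(w))\equiv uw\equiv wu\equiv \rw(P(w))\,u$. Hence $\rw(P(w))\in C(u)$; equivalently, $C(u)$ is a union of Knuth classes.
\item Applying the first claim to $\rw(P(w))$ gives $\RC_m(\rw(P(w)))\in C(\RC_m(u))$.
\item Taking insertion tableaux gives $\epsilon_m(P(w))=P(\RC_m(\rw(P(w))))\in P(C(\RC_m(u)))$.
\end{enumerate}

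There is no real obstacle here. The only point needing care is step 2, replacing $w$ by a Knuth-equivalent word while staying in the centralizer. This follows because concatenation respects Knuth equivalence, and the Knuth relations are local moves that remain valid inside any longer word.
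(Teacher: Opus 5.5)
Your proof is correct and follows the paper's argument. The first claim is exactly the paper's application of \cref{lem:rc} to $uw\equiv wu$, and the second rests on the same fact $\rw(P(w))\equiv w$. The only cosmetic difference is that you first move this equivalence into $C(u)$ and then apply the first claim, whereas the paper's one-line justification uses \cref{lem:rc} to get $\RC_m(\rw(P(w)))\equiv\RC_m(w)$, so that $\epsilon_m(P(w))=P(\RC_m(w))$.
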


\begin{proof}
Because $uw\equiv wu$, \cref{lem:rc} gives that 
\[
\RC_m(w)\RC_m(u)\equiv \RC_m(u)\RC_m(w),
\]
which is the first assertion. The second follows from the definition of $\epsilon_m$ because $\rw(P(w))\equiv w$.
\end{proof}

Let $A$ be a straight semistandard tableau with entries in $[m]$, and let $B$ be a skew semistandard tableau with entries greater than $m$. We consider the union $A\sqcup B$ only when the shapes of $A$ and $B$ are disjoint so that this union is a semistandard Young tableau. Otherwise, $A\sqcup B$ is not defined. 

\begin{lemma}\label{lem:barrier}
Let $U$ be a semistandard tableau, and let $x$ be a word in $[m]^*$. If $P(\rw(U_{\leq m})\,x)\sqcup U_{>m}$ is defined, then
\[
P(\rw(U)\,x)=P(\rw(U_{\leq m})\,x)\sqcup U_{>m}.
\]
If $P(x\,\rw(U_{\leq m}))\sqcup U_{>m}$ is defined, then
\[
P(x\,\rw(U))=P(x\,\rw(U_{\leq m}))\sqcup U_{>m}.
\]
\end{lemma}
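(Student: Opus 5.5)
For the first assertion, I would compute $P(\rw(U)\,x)$ by row-inserting the letters of $x$ one at a time into $U$. Since $P(\rw(U))=U$, this is legitimate. I would then argue by induction on the length of $x$ that no entry larger than $m$ is ever bumped. Granting this, the insertions only affect the entries at most $m$, and they behave exactly as they would when inserting $x$ into $U_{\leq m}=P(\rw(U_{\leq m}))$. The precise claim is this: a letter $a\le m$ inserted into a row bumps the leftmost entry greater than $a$. If that entry is at most $m$, the same happens in the small subtableau. If the row's small part ends before any such entry, then in $U_{\le m}$ the letter is appended at the end of the row, while in $U$ it would bump a big entry.

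The hypothesis that $P(\rw(U_{\leq m})\,x)\sqcup U_{>m}$ is defined is exactly what rules out this bad case. Suppose some insertion step appends a small letter at a position that in $U$ is occupied by an entry of $U_{>m}$. Later steps only add boxes to the small part, because the small shapes grow monotonically. So that position would lie in the final shape of $P(\rw(U_{\le m})x)$, which would then overlap $U_{>m}$, contradicting disjointness. Hence every append in the small computation lands in a cell not occupied by $U_{>m}$. Since $U_{>m}$ occupies cells to the right of and below the small part in each row, that cell is the first box after the small entries, and it is empty in the full tableau. So the full insertion also appends there, and the two processes agree step by step. The result is then $P(\rw(U_{\le m})x)\sqcup U_{>m}$. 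The semistandard condition is automatic, since the union is assumed to be a tableau.

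For the second assertion, I would use the symmetry between row and column insertion. I would compute $P(x\,\rw(U))$ as $P(x\,\rw(U_{\le m})\,\rw(U_{>m})$-part$)$ using Knuth equivalence, or more cleanly by column-inserting the letters of $x$ from right to left into $U$. This is the standard fact that $P(a\,y)$ is obtained from $P(y)$ by column insertion of $a$. In column insertion, a small letter bumps the topmost entry in a column that is at least as large. The same barrier argument applies with columns in place of rows: a small letter bumps a big entry only when it would otherwise be appended at the bottom of the column's small part, and the definedness hypothesis forbids that overlap for the same monotonicity reason. Alternatively, I could derive the second assertion from the first via the $\RC$-type reversal, but complementing would mix up big and small entries, so direct column insertion is cleaner.

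The main obstacle is making the monotonicity argument airtight. I need that any cell the small part ever reaches during the process remains in the final small shape, which holds because insertion only adds cells. I also need to check that the cell where a small letter would collide with a big entry of $U$ is exactly the cell the small computation would add.
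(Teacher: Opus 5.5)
Your proposal is correct. For the first assertion you follow the paper's route: row-insert $x$ into $U$ and check that no entry larger than $m$ is ever bumped. Your observation that every cell appended in the small computation survives into the final shape of $P(\rw(U_{\le m})\,x)$ is exactly the justification the paper leaves implicit.

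For the second assertion you take a genuinely different route. The paper places $P(x)$ southwest of $U$, rectifies by jeu de taquin, and compares the slides with and without $U_{>m}$. You instead column-insert $x_k,\dots,x_1$ into $U$, using that $P(a\,y)$ is the column insertion of $a$ into $P(y)$ with the rule ``bump the topmost entry $\ge a$.'' This makes the second half a transpose of the first and reuses the same shape-monotonicity argument.

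Your route also has a concrete advantage. With column insertion, $U$ stays in place, so ``no entry larger than $m$ moves'' is literally what needs to be shown. In the skew tableau obtained by placing $P(x)$ southwest of $U$, the copy of $U$ is shifted right by the number of columns of $P(x)$. Its large entries must therefore slide back left during rectification even when the hypothesis holds. For example, take $m=1$, $U$ the single column with entries $1,2$, and $x=1$: the $2$ slides from $(2,2)$ to $(2,1)$. So the paper's claim that no large entry moves cannot be taken literally in that model, and its ``gap'' contradiction would have to be reworked to account for the shift. Your argument never meets this issue. The jeu de taquin picture's appeal is mainly that it uses the standard plactic-product construction rather than a second insertion algorithm.

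One small cleanup: drop the aside about rewriting $P(x\,\rw(U))$ via $\rw(U_{\le m})$ followed by the large entries. In general $\rw(U)\not\equiv\rw(U_{\le m})\,\rw(U_{>m})$; in the same example, $21\not\equiv 12$.
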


\begin{proof}
If $P(\rw(U_{\leq m})\,x)\sqcup U_{>m}$ is defined, then entries in $U_{>m}$ do not move when we insert $x$ into $U$. This implies the first statement.  

For the second statement, compute $P(x\,\rw(U))$ by placing $P(x)$ southwest of $U$ and rectifying by jeu-de-taquin, as in the standard description of the plactic product. Compare this with the same rectification using only $P(x)$ and $U_{\leq m}$. Until an entry larger than $m$ moves, the two computations have the same holes and the same entries from $[m]$ around those holes. If a hole has both a candidate from $[m]$ and a candidate larger than $m$, the candidate in $[m]$ is chosen because it is smaller. Thus, the first possible movement of an entry larger than $m$ would have to occur when a hole has no small candidate from $[m]$ but has a candidate larger than $m$ to its right or below.
But in the rectification with only entries from $[m]$, that same hole is then an outer corner and the slide path terminates there. This cell is then deleted. Then the final tableau obtained by keeping $U_{>m}$ fixed would have a gap, contradicting the assumption that $P(x\,\rw(U_{\leq m}))\sqcup U_{>m}$ is a semistandard Young tableau. We conclude that no entry larger than $m$ moves, which completes the proof. 
\end{proof}

We can now prove \eqref{eq:SW_Conj_6.5}.  

\begin{theorem}\label{thm:6.5} 
Let $u$ be a word in $[m]^*$. We have
\[
P(C(\RC_m(u)))=\tau_m(P(C(u))).
\]
\end{theorem}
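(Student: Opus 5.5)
We will show the inclusion $\tau_m(P(C(u)))\subseteq P(C(\RC_m(u)))$ for every $u\in[m]^*$. The reverse inclusion then comes from symmetry. Since $\RC_m$ is an involution on $[m]^*$ and $\tau_m$ is an involution on tableaux, applying the inclusion to $\RC_m(u)$ gives
\[
\tau_m(P(C(\RC_m(u))))\subseteq P(C(u)).
\]
Applying $\tau_m$ to both sides yields $P(C(\RC_m(u)))\subseteq\tau_m(P(C(u)))$. That $\tau_m$ is an involution follows because $\RC_m$ is an involution and $\epsilon_m$ is well defined on Knuth classes by \cref{lem:rc}.

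Fix $w\in C(u)$ and set $T=P(w)$. We must produce a word in $C(\RC_m(u))$ whose insertion tableau is $\tau_m(T)$. The natural candidate is $w'=\rw(\tau_m(T))$, which we split as
\[
w'\equiv \rw(\epsilon_m(T_{\leq m}))\cdot(\text{skew part}).
\]
First, $P(w)=T$ and Knuth equivalence preserve membership in $C(u)$, so $\rw(T)\in C(u)$.

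Next we pass to the small part. By \cref{lem:SW}, inserting $u$ into $T$ never bumps an entry larger than $m$. Hence
\[
P(\rw(T_{\leq m})\,u)\sqcup T_{>m}
\]
is defined and equals $P(wu)$. By the first part of \cref{lem:barrier}, and since $P(wu)=P(uw)$, the second part of \cref{lem:barrier} then yields
\[
P(u\,\rw(T_{\leq m}))\sqcup T_{>m}=P(uw)=P(wu)=P(\rw(T_{\leq m})\,u)\sqcup T_{>m}.
\]
Restricting to entries at most $m$, using $P(\cdot)_{\leq m}=P((\cdot)_{\leq m})$, gives $\rw(T_{\leq m})\in C(u)$. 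We also record that both of these tableaux fit disjointly with $T_{>m}$, which is the step that needs care. \cref{cor:small} then gives $\RC_m(\rw(T_{\leq m}))\in C(\RC_m(u))$, and its $P$-tableau is $\epsilon_m(T_{\leq m})$.

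Finally we reattach $T_{>m}$. Let $U=\tau_m(T)$, so that $U_{\leq m}=\epsilon_m(T_{\leq m})$ and $U_{>m}=T_{>m}$. We want $\rw(U)\in C(\RC_m(u))$, that is,
\[
P(\rw(U)\,\RC_m(u))=P(\RC_m(u)\,\rw(U)).
\]
By \cref{lem:barrier}, both sides equal $Q\sqcup T_{>m}$, where
\[
Q=P(\epsilon\text{-part}\cdot\RC_m(u))=P(\RC_m(u)\cdot\epsilon\text{-part}).
\]
This holds provided $Q\sqcup T_{>m}$ is defined, i.e., provided the shape of $Q$ fits inside the complement of the skew shape of $T_{>m}$.

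This compatibility is the main obstacle. Let $S=P(\rw(T_{\leq m})u)$. Because $\RC_m$ of a word is Knuth equivalent to the $\RC_m$ of an equivalent word, we have $Q=\epsilon_m(S)$. Greene's theorem (\cref{thm:Greene}) via \cite[Lemma~6.4]{SW25} says $\epsilon_m$ preserves shape. Hence $Q$ has the same shape as $S$, and $S\sqcup T_{>m}$ is defined by the second paragraph. So the union $Q\sqcup T_{>m}$ is defined, and semistandardness across the boundary is automatic because the entries of $T_{>m}$ exceed $m$.

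I expect the delicate point to be verifying that the shape condition alone makes $\sqcup$ a valid semistandard tableau, and that \cref{lem:barrier}'s hypotheses are met on both sides. Both reduce to the shape identity $\mathrm{sh}(Q)=\mathrm{sh}(S)$, combined with the fact, obtained from $w\in C(u)$ and \cref{lem:SW}, that $\mathrm{sh}(S)\sqcup \mathrm{sh}(T_{>m})$ is a straight shape.
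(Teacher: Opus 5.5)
Your proposal is correct and follows the paper's proof essentially step for step: restriction gives $T_{\le m}\in P(C(u))$, \cref{lem:SW} gives $P(\rw(T)\,u)=S\sqcup T_{>m}$, and \cref{cor:small} with \cref{lem:rc} identifies $Q=\epsilon_m(S)$. Shape preservation of $\epsilon_m$ then verifies the hypotheses of both parts of \cref{lem:barrier}, and the reverse inclusion follows because $\tau_m$ and the $m$-reverse complement are involutions.

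Two incidental remarks should be cleaned up, though neither is load-bearing. First, in your second paragraph you cannot invoke the second part of \cref{lem:barrier} before knowing that $P(u\,\rw(T_{\le m}))\sqcup T_{>m}$ is defined. The restriction identity $P(u\,\rw(T_{\le m}))=P(uw)_{\le m}=P(wu)_{\le m}=S$ gives this directly, and that identity is all you actually use. Second, your opening claim that $\rw(\tau_m(T))$ is Knuth equivalent to $\rw(\epsilon_m(T_{\le m}))$ followed by a word in letters larger than $m$ is false for general tableaux. For example, when $m=2$, no such word works for the tableau with rows $12$ and $3$. You should drop it; your actual argument works directly with $\rw(U)$ through \cref{lem:barrier}.
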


\begin{proof}
We first prove that \[
\tau_m(P(C(u)))\subseteq P(C(\RC_m(u))).
\]
Take $T\in P(C(u))$. Since $C(u)$ is a union of Knuth equivalence classes, we may use the representative $\rw(T)$. Thus,
\[
P(u\,\rw(T))=P(\rw(T)\,u).
\]
By the standard restriction property of RSK, 
\[\begin{aligned}
P(u\,\rw(T_{\leq m}))&=P(u\,\rw(T))_{\leq m} \\ &=P(\rw(T)\,u)_{\leq m} \\ &=P(\rw(T_{\leq m})\,u),
\end{aligned} 
\]
so $T_{\leq m}\in P(C(u))$.

Since $u$ has no letters larger than $m$, \cref{lem:SW} implies that no entry of $T_{>m}$ is bumped while forming $P(\rw(T)\,u)$ from $T$. Consequently, 
\[
P(\rw(T)\,u)=P(\rw(T_{\leq m})\,u)\sqcup T_{>m}.
\]

By \cref{cor:small}, $\epsilon_m(T_{\leq m})\in P(C(\RC_m(u)))$. In other words,
\[
P(\rw(\epsilon_m(T_{\leq m}))\RC_m(u))
=
P(\RC_m(u)\rw(\epsilon_m(T_{\leq m}))).
\]
Using this, we deduce that 
\[
P(\rw(\epsilon_m(T_{\leq m}))\RC_m(u))
=
\epsilon_m(P(\rw(T_{\leq m})\,u)).
\]
Indeed, $\rw(\epsilon_m(T_{\leq m}))\equiv \RC_m(\rw(T_{\leq m}))$, and \cref{lem:rc} tells us that $\RC_m$ sends the plactic class of $u\,\rw(T_{\leq m})$ to the plactic class of $\RC_m(\rw(T_{\leq m}))\RC_m(u)$.

The tableau $\epsilon_m(P(\rw(T_{\leq m})\,u))$ has the same shape as $P(\rw(T_{\leq m})\,u)$. If $A$ is any tableau of the same shape as $P(\rw(T_{\leq m})\,u)$ with entries in $[m]$, then $A\sqcup T_{>m}$ is a well-defined semistandard Young tableau (because $P(\rw(T_{\leq m})\,u)\sqcup T_{>m}$ is a well-defined semistandard Young tableau with entries in $[m]$). Hence, $\epsilon_m(P(\rw(T_{\leq m})\,u))\sqcup T_{>m}$ is a well-defined semistandard Young tableau. 

Now set
\[
U=\tau_m(T)=\epsilon_m(T_{\le m})\sqcup T_{>m}
\quad\text{and}\quad
x=\RC_m(u).
\]
Then $U_{\le m}=\epsilon_m(T_{\le m})$ and $U_{>m}=T_{>m}$. We have already shown that
\[
P(\operatorname{rw}(U_{\le m})\,x)
=
P(\operatorname{rw}(\epsilon_m(T_{\le m}))\RC_m(u))
=
\epsilon_m(P(\operatorname{rw}(T_{\le m})\,u)).
\]
Furthermore, since $\epsilon_m(T_{\le m})\in P(C(\RC_m(u)))$, we also have
\[
P(x\operatorname{rw}(U_{\le m}))
=
P(\RC_m(u)\operatorname{rw}(\epsilon_m(T_{\le m})))
=
P(\operatorname{rw}(\epsilon_m(T_{\le m}))\RC_m(u)).
\]
Therefore,
\[
P(x\operatorname{rw}(U_{\le m}))
=
P(\operatorname{rw}(U_{\le m})\,x)
=
\epsilon_m(P(\operatorname{rw}(T_{\le m})\,u)).
\]
Since
\[
\epsilon_m(P(\operatorname{rw}(T_{\le m})\,u))\sqcup T_{>m}
\]
is defined, both hypotheses of \cref{lem:barrier} hold for this choice of $U$ and $x$. Applying the two parts of \cref{lem:barrier} gives
\[
P(\operatorname{rw}(U)\,x)
=
P(\operatorname{rw}(U_{\le m})\,x)\sqcup U_{>m}
=
\epsilon_m(P(\operatorname{rw}(T_{\le m})\,u))\sqcup T_{>m}
\]
and
\[
P(x\operatorname{rw}(U))
=
P(x\operatorname{rw}(U_{\le m}))\sqcup U_{>m}
=
\epsilon_m(P(\operatorname{rw}(T_{\le m})\,u))\sqcup T_{>m}.
\]
Thus,
\[
P(\operatorname{rw}(U)\,x)=P(x\operatorname{rw}(U)).
\]
Equivalently, $\operatorname{rw}(U)\in C(x)=C(\RC_m(u))$. Since $P(\operatorname{rw}(U))=U$, this shows that
\[
U=\tau_m(T)\in P(C(\RC_m(u))).
\]
As $T$ was arbitrary, we have proved that
\[
\tau_m(P(C(u)))\subseteq P(C(\RC_m(u))).
\]

Finally, note that $\RC_m$ and $\tau_m$ are both involutions. Applying the containment just proved with $\RC_m(u)$ in place of $u$ gives
\[
\tau_m(P(C(\RC_m(u))))\subseteq P(C(u)).
\]
Applying $\tau_m$ to this inclusion yields
\[
P(C(\RC_m(u)))\subseteq \tau_m(P(C(u))).
\]
Combining the two containments proves the desired equality. 
\end{proof}

\section*{Acknowledgments} 
The author thanks Harvard University and OpenAI for providing access to ChatGPT~5.4 Pro. The author has no professional affiliation with OpenAI. 

An earlier draft of this article also presented a ChatGPT-generated proof of a formula for the Ehrhart polynomial of a partial permutohedron, which was conjectured by Behrend, Castillo, Chavez, Diaz-Lopez, Escobar, Harris, and Insko \cite{BCCSEHI}. It turns out this result was already proven earlier by Behrend \cite{Behrend}, so we have removed that section. The author thanks John Lentfer and Andr\'es R. Vindas-Mel\'endez for pointing out this reference.

\end{document}